\tikzset{
    right angle quadrant/.code={
        \pgfmathsetmacro\quadranta{{1,1,-1,-1}[#1-1]}     
        \pgfmathsetmacro\quadrantb{{1,-1,-1,1}[#1-1]}},
    right angle quadrant=1, 
    right angle length/.code={\def\rightanglelength{#1}},   
    right angle length=1ex, 
    right angle symbol/.style n args={3}{
        insert path={
            let \p0 = ($(#1)!(#3)!(#2)$) in     
                let \p1 = ($(\p0)!\quadranta*\rightanglelength!(#3)$), 
                \p2 = ($(\p0)!\quadrantb*\rightanglelength!(#2)$) in 
                let \p3 = ($(\p1)+(\p2)-(\p0)$) in  
            (\p1) -- (\p3) -- (\p2)
        }
    }
}
\newtheorem{theorem}{Theorem}[section]
\newtheorem{lemma}[theorem]{Lemma}
\newtheorem{proposition}[theorem]{Proposition}
\newtheorem{conjecture}[theorem]{Conjecture}
\newtheorem{question}[theorem]{Question}
\theoremstyle{definition}
\newtheorem{definition}[theorem]{Definition}
\newtheorem{remark}[theorem]{Remark}
\title[Running heading with forty characters or less]
      {On the ergodicity of geodesic flows on surfaces of nonpositive curvature}
\author[first-name1 last-name1 and first-name2 last-name2]{Weisheng Wu}
\subjclass{}
 \keywords{Ergodicity, Geodesic flow, nonpositive curvature.}
\address{Department of Mathematics, Pennsylvania State University, University Park, PA 16802, USA}
 \email{wu@math.psu.edu}
\begin{document}
\maketitle

\markboth{On the ergodicity of geodesic flows on surfaces of nonpositive curvature}
{On the ergodicity of geodesic flows on surfaces of nonpositive curvature}
\renewcommand{\sectionmark}[1]{}

\begin{abstract}
Let $M$ be a smooth compact surface of nonpositive curvature, with genus $\geq 2$. We prove the ergodicity of the geodesic flow on the unit tangent bundle of $M$ with respect to the Liouville measure under the condition that the set of points with negative curvature on $M$ has finitely many connected components. Under the same condition, we prove that a non closed "flat" geodesic doesn't exist, and moreover, there are at most finitely many flat strips, and at most finitely many isolated closed "flat" geodesics.
\end{abstract}

\maketitle
\section{Introduction}

Let $M$ be a smooth, connected, compact surface without boundary, with genus $g \geq 2$, and of nonpositive curvature. The geodesic flow $\Phi^t$, is defined on the unit tangent bundle $T^1M$. It is well known that when the curvature of the surface is strictly negative, the geodesic flow is Anosov, and its ergodicity with respect to the Liouville measure $\nu$ can be proved by the Hopf argument (cf., for example \cite{BS}). However, for surfaces of nonpositive curvature, the ergodicity of the geodesic flow is not known yet. The dynamical behavior of the flow gets more complicated because of the existence of the "flat geodesics" defined as follows. We define:
$$\Lambda:=\{x\in T^1M: K(\gamma_x(t))\equiv 0, \ \forall t\in \mathbb{R}\}$$
where $K$ denotes the curvature of the point, and $\gamma_x(t)$ denotes the unique geodesic on $M$ with an initial velocity $x\in T^1M$. we call $\gamma_x$ a "flat" geodesic if $x\in \Lambda$, i.e., the curvature along the geodesic is always zero. It is proved that the geodesic flow is Anosov if and only if $\Lambda=\emptyset$ (cf. \cite{E}), and in this case the ergodicity follows from the Hopf argument.

By Pesin's well-known result (cf. \cite{BP}), the geodesic flow is ergodic on the following set:
\begin{equation}\label{e:Pesinset}
\Delta:=\{x\in T^1M: \limsup _{t\to\infty} \frac{1}{t}\int_0^t K(\gamma_x(s))ds <0\}.
\end{equation}
Clearly $\Delta \subset \Lambda^c$. It is stated in \cite{BM} that the geodesic flow is also ergodic on $\Lambda^c$. Indeed, we have
\begin{lemma}
$\nu(\Lambda^c \setminus \Delta)=0$.
\end{lemma}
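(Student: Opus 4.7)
The strategy is to use Birkhoff's ergodic theorem to upgrade the $\limsup$ in the definition of $\Delta$ to a genuine limit, and then to exploit the sign condition $K\le 0$ to force that limit to vanish only on flat geodesics.

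Let $\tilde K(x) := K(\pi(x))$, where $\pi\colon T^1 M\to M$ is the footpoint projection; this is a continuous, hence bounded, $\nu$-integrable function on $T^1M$. The flow $\Phi^t$ preserves the Liouville measure $\nu$, so Birkhoff's ergodic theorem produces a $\Phi^t$-invariant function $\bar K\in L^1(\nu)$ satisfying
\[
\bar K(x) = \lim_{t\to\infty}\frac{1}{t}\int_0^t K(\gamma_x(s))\,ds \quad \text{for $\nu$-a.e.\ } x,
\]
with $\int_A \bar K\,d\nu = \int_A \tilde K\,d\nu$ for every $\Phi^t$-invariant measurable set $A$. Since $K\le 0$, we have $\bar K\le 0$ a.e.; wherever $\bar K(x)<0$ the limit equals the $\limsup$ in \eqref{e:Pesinset}, so such $x$ lie in $\Delta$. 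It therefore suffices to show that the $\Phi^t$-invariant set
\[
E:=\{x\in T^1M:\bar K(x)=0\}
\]
is contained in $\Lambda$ modulo a $\nu$-null set.

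Applied to $A=E$, Birkhoff yields $\int_E \tilde K\,d\nu = \int_E \bar K\,d\nu = 0$, and combined with $\tilde K\le 0$ this forces $\tilde K = 0$ $\nu$-a.e.\ on $E$. Since $E$ is $\Phi^s$-invariant for every $s\in\mathbb{R}$, the same argument applied to $\tilde K\circ\Phi^s$ shows that for each fixed $s$, $K(\gamma_x(s))=0$ for $\nu$-a.e.\ $x\in E$. Applying Fubini on $E\times[-T,T]$ to the measurable set $\{(x,s):K(\gamma_x(s))\ne 0\}$ and letting $T\to\infty$ along integers, for $\nu$-a.e.\ $x\in E$ the set $\{s\in\mathbb{R}:K(\gamma_x(s))\ne 0\}$ has Lebesgue measure zero. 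The continuity of $s\mapsto K(\gamma_x(s))$ then forces $K\circ\gamma_x\equiv 0$ on $\mathbb{R}$, so $x\in\Lambda$.

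Combining the two steps, for $\nu$-a.e.\ $x\in\Lambda^c$ one has $\bar K(x)<0$ and therefore $x\in\Delta$, giving $\nu(\Lambda^c\setminus\Delta)=0$. The only technically delicate point is the Fubini/continuity argument in the second paragraph that promotes a.e.\ vanishing of each time-$s$ slice to pointwise vanishing of the entire orbit; everything else is a direct application of Birkhoff's theorem to the bounded continuous function $\tilde K$.
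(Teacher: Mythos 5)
Your proof is correct and takes essentially the same approach as the paper: both apply Birkhoff's ergodic theorem to the curvature along the flow, use the sign condition $K\le 0$ to force the time average to vanish a.e., and then upgrade a.e.\ vanishing of each time slice to pointwise vanishing along orbits via a Fubini-type argument together with continuity of $K$. The only differences are cosmetic — the paper argues by contradiction with the indicator-weighted function $\chi_{\Lambda^c\setminus\Delta}\cdot K\circ\pi$, whereas you work directly with the invariant level set $\{\bar K=0\}$ and make the Fubini step explicit where the paper appeals to smoothness of the orbit foliation.
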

\begin{proof}
Assume $\nu(\Lambda^c \setminus \Delta)>0$. Let $\pi: T^1M \to M$ be the natural projection. Denote $f(x):=\chi_{\Lambda^c \setminus \Delta}(x)\cdot K(\pi(x))$. Note that $f(x) \leq 0$. By Birkhoff Ergodic Theorem, for $\nu$-a.e. $x\in T^1M$,
$$\lim_{t \to \infty}\frac{1}{t}\int_0^t f(\Phi^s(x))ds :=\tilde{f}(x)$$
and
\begin{equation}\label{e:integral}
\int_{T^1M}\tilde{f}(x)d\nu(x)=\int_{T^1M}f(x)d\nu(x) \leq 0.
\end{equation}
By the definition of $\Delta$ in \eqref{e:Pesinset}, $\tilde{f}(x)=0$ for $\nu$-a.e. $x \in T^1M$. Then by \eqref{e:integral}, $\int_{T^1M}f(x)d\nu(x)=0$, so $f(x)=0$ for $\nu$-a.e. $x \in T^1M$. Hence, $K(\pi(x))=0$ for $\nu$-a.e. $x \in \Lambda^c \setminus \Delta$. Since the orbit foliation of $\Phi^t$ is smooth, for $\nu$-a.e. $x \in \Lambda^c \setminus \Delta$, one has $K(\Phi^t(x))=0$ for a.e. $t$. By continuity of the curvature function $K$, we have $K(\Phi^t(x))\equiv0$ for $\forall t \in \mathbb{R}$, i.e., $x\in \Lambda$, a contradiction to $x \in \Lambda^c \setminus \Delta$. Therefore, $\nu(\Lambda^c \setminus \Delta)=0$.
\end{proof}

 So the geodesic flow is ergodic on the set $\Lambda^c$. Therefore, the geodesic flow is ergodic on $T^1M$ if $\nu(\Lambda)=0$. It is not known in general if $\nu(\Lambda)=0$, but this is the case for all the known examples so far. Moreover, in all these examples, the flat geodesics are always closed. This motivates the following conjecture whose statement is stronger than ergodicity (cf. \cite{RH}):

\begin{conjecture}\label{conjecture}
All flat geodesics are closed and there are only finitely many homotopy classes of such geodesics. In particular, $\nu(\Lambda)=0$ and hence the geodesic flow is ergodic.
\end{conjecture}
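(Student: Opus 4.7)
The plan is to prove the conjecture under the working hypothesis announced in the abstract: the negative-curvature locus $N := \{p \in M : K(p) < 0\}$ has only finitely many connected components, so that its complement, the closed flat set $F := \{p : K(p) = 0\}$, has a controlled topology. The three outputs of the conjecture (closedness of flat geodesics, finiteness of their homotopy classes, $\nu(\Lambda) = 0$) will be obtained in that order; ergodicity on $T^1 M$ then follows from the Lemma above via $\nu(\Lambda^c) = 1$.

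First I would lift the problem to the universal cover $\widetilde M$. Any flat geodesic $\gamma \subset M$ lifts to a geodesic $\widetilde\gamma$ lying entirely in a single connected component $\widetilde F_i$ of the preimage of $F$. The classical structure results of nonpositive-curvature geometry (Eberlein's local analysis of flat sets, and the Flat Strip Theorem asserting that two biasymptotic geodesics bound a Euclidean strip) show that each $\widetilde F_i$ is locally modeled on a convex subset of $\mathbb{R}^2$ and that the flat geodesics inside it form a rigid family parametrized by their initial data on a transversal.

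Next I would classify the components by their stabilizers in $\pi_1(M)$. Since $g \geq 2$, every abelian subgroup of the surface group $\pi_1(M)$ is infinite cyclic or trivial, so $\mathrm{Stab}(\widetilde F_i)$ is either trivial or $\langle \phi \rangle$ for some hyperbolic element $\phi$. In the cyclic case, $\widetilde F_i / \langle \phi \rangle$ projects to either an isolated closed flat geodesic or a flat strip in $M$, matching the dichotomy announced in the abstract. Finiteness of homotopy classes then follows by counting: distinct $\pi_1(M)$-orbits of the $\widetilde F_i$ correspond bijectively to connected components of $F$, of which there are only finitely many by hypothesis.

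The main obstacle is ruling out a non-closed flat geodesic whose lift has trivial stabilizer. Here I would combine Poincaré recurrence, applied to the restriction of $\Phi^t$ to the compact invariant set $\Lambda \subset T^1 M$, with the finiteness of components of $F$: a non-closed $\gamma$ must be recurrent, and recurrence forces arbitrarily close returns of $\widetilde\gamma$ inside a single $\widetilde F_i$. A close-return argument using the Flat Strip Theorem should then yield a nontrivial deck transformation translating $\widetilde\gamma$, contradicting the assumption of trivial stabilizer and forcing $\gamma$ to be closed. I expect this step to be the most technical, because one needs a uniform lower bound on the ``width'' of $\widetilde F_i$ in order to trigger flat-strip rigidity from the close return, and this is precisely where the finiteness hypothesis on components of $N$ enters essentially. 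Once all flat geodesics are closed with only finitely many homotopy types, $\nu(\Lambda) = 0$ is immediate since a finite union of closed geodesic orbits is $\nu$-null.
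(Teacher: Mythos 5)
Your proposal takes a genuinely different route from the paper, but several of its steps have gaps that the paper's own argument is specifically engineered to avoid.

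The paper does not use any structure theory of the flat set $F = \{K=0\}$ or of its lifted components, nor stabilizers in $\pi_1(M)$. Instead, it argues via a dichotomy on $\Lambda$: either $\Lambda \subset \text{Per}(\Phi)$ (Theorem \ref{A}) or not (Theorem \ref{B}), and then shows the second alternative contradicts the finite-components hypothesis (Theorem \ref{Main}). The key technical tools are an expansivity/close-return construction (Lemma \ref{closelemma}, Proposition \ref{prop}), Ruggiero's infinite-area result for ideal triangles with a flat edge (Theorem \ref{ideal triangle}), the Flat Strip Lemma, and Cao--Xavier's theorem that flat strips consist of closed geodesics. The finite-components hypothesis enters the paper at a very concrete point: since there are finitely many components of the \emph{negative}-curvature set, their inscribed circle radii are bounded away from $0$, so they cannot squeeze into the narrowing ideal triangle between two positively asymptotic flat geodesics; this forces the tail of the triangle to be flat, hence the distance between the geodesics is eventually constant, contradicting Theorem \ref{B}.

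Your argument has the following concrete gaps. (1) You assert that each lifted flat component $\widetilde F_i$ is ``locally modeled on a convex subset of $\mathbb{R}^2$'' — no such structure result holds or is used; the flat set $\{K=0\}$ need not be convex or locally Euclidean in any sense stronger than ``curvature vanishes there,'' and the paper is careful never to rely on such a claim. (2) You conclude that $\mathrm{Stab}(\widetilde F_i)$ is trivial or cyclic by citing the classification of abelian subgroups of $\pi_1(M)$, but you give no reason why this stabilizer should be abelian; a component of $F$ could carry a nonabelian surface subgroup. (3) The ``close-return argument using the Flat Strip Theorem'' is exactly where the real work lies and it is left as a hope. The Flat Strip Theorem needs two geodesics at bounded distance for \emph{all} $t \in \mathbb{R}$; a close return gives closeness only over a finite window. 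The paper bridges this gap with the whole apparatus of Lemmas \ref{boundary}--\ref{flat closed geodesic} and the ideal triangle area argument, none of which appears in your sketch. (4) Finiteness of homotopy classes does not follow from ``counting components of $F$'': a single component of $F$ could a priori support infinitely many homotopy classes of closed flat geodesics. In the paper this finiteness is the content of Theorem \ref{A}, which has its own nontrivial compactness/expansivity proof and does not use the finite-components hypothesis at all. So while the intended scope (prove the conjecture under the finite-components hypothesis) is correct, the plan would need to be essentially rebuilt around the expansivity and ideal-triangle machinery to close these holes.
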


In this paper we prove the following two theorems according to the dichotomy: (1) $\Lambda \subset \text{Per}(\Phi)$; \ (2)$\Lambda \cap (\text{Per\ } (\Phi))^c \neq \emptyset$. Here $\text{Per}(\Phi)$ denotes the set of periodic points of the geodesic flow, and $\mathcal{O}(z)$ will denote the orbit of $z$ under the geodesic flow.

\begin{theorem}\label{A}
If $\Lambda \subset \text{Per}(\Phi)$, then
$$\Lambda = \mathcal{O}_1 \cup \mathcal{O}_2 \cup \ldots \mathcal{O}_k \cup \mathcal{F}_1\cup \mathcal{F}_2 \cup \ldots \cup \mathcal{F}_l,$$
where each $\mathcal{O}_i, 1\leq i \leq k$ is an isolated periodic orbit and each $\mathcal{F}_j, 1\leq j \leq l$ consists of vectors tangent to a flat strip. Here $k$ or $l$ are allowed to be $0$ if there is no isolated closed flat geodesic or no flat strip.
\end{theorem}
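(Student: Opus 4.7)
The plan is to combine the flat strip theorem for nonpositive curvature with topological control on the closed set $F := \{p\in M : K(p) = 0\}$ coming from the hypothesis on $\{K<0\}$.

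First I would apply the flat strip theorem: any flat geodesic $\gamma$ either has no nearby parallel flat geodesic (so $\gamma$ is \emph{isolated in $\Lambda$}), or lies inside a maximal flat strip, i.e., a maximal isometric immersion of $[0,a]\times\mathbb{R}$ with $a>0$ composed of parallel flat geodesics. Under the hypothesis $\Lambda \subset \text{Per}(\Phi)$, every flat geodesic is closed, and within a maximal flat strip all parallel geodesics share the common period of the core; hence each strip projects to an immersed flat cylinder in $M$ and produces one piece $\mathcal{F}_j$, while each isolated closed flat geodesic produces one $\mathcal{O}_i$. This establishes the qualitative decomposition in the theorem.

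The main work is finiteness. I would use the topology of $F$: since $\{K<0\}$ is open in $M$ with finitely many connected components, its topological boundary is a finite union of closed curves, and so $F$ itself has finitely many connected components. A $1$-dimensional component that supports a flat geodesic must be a smooth closed geodesic (a standard variational argument using $K \leq 0$ rules out the alternative), contributing at most one $\mathcal{O}_i$. A $2$-dimensional component $C$ lifts to a convex flat Euclidean region $\tilde C \subset \tilde M$ whose stabiliser in $\pi_1(M)$ acts freely by Euclidean isometries and must be cyclic, since a $\mathbb{Z}^2$ subgroup would embed a flat torus in $M$, incompatible with $g\geq 2$. Therefore $C$ is a flat annulus (or disk), yielding at most one $\mathcal{F}_j$. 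Summing over the finitely many components gives the bounds on $k$ and $l$.

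The subtlest point --- and the main obstacle --- is ruling out accumulation phenomena: distinct isolated closed flat geodesics might a priori accumulate on each other or on a flat strip, and distinct $\mathcal{F}_j$'s might accumulate as well. An Arzel\`a--Ascoli compactness argument on $T^1M$ shows that any such accumulation yields a limiting orbit in $\Lambda$, which is closed by assumption and is either itself isolated (contradicting accumulation) or lies in a flat strip (in which case the accumulating orbits must lie in or adjacent to that strip, contradicting their isolation). Combined with the topological finiteness above, this completes the proof.
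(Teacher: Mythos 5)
There is a fundamental problem with your second paragraph: you invoke the hypothesis that $\{p\in M:K(p)<0\}$ has finitely many connected components, but that hypothesis belongs to Theorem \ref{Main}, not to Theorem \ref{A}. Theorem \ref{A} assumes only $\Lambda\subset\text{Per}(\Phi)$; it must be proved with no restriction on the topology of $\{K<0\}$. (In fact, the paper proves Theorem \ref{Main} \emph{from} Theorem \ref{A}, so importing that hypothesis here would damage the logical structure of the paper.) Beyond the misplaced hypothesis, the inferences drawn from it are also shaky: the complement of an open set with finitely many components can have infinitely many components, and for a merely smooth curvature function the boundary of $\{K<0\}$ need not be a finite union of closed curves. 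The counting of flat strips per component of $\{K=0\}$ is likewise asserted rather than argued.

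There is also a gap in your first paragraph. You claim that if $\gamma$ has no nearby parallel flat geodesic, then $\gamma$ is ``isolated in $\Lambda$''. The Flat Strip Lemma (Lemma \ref{flat strip lemma}) only says that two distinct geodesics staying at bounded distance \emph{for all time} bound a flat strip; it does not prevent a sequence of closed flat geodesics in different homotopy classes from converging to $\gamma$ in $T^1M$. Proving that this cannot happen is precisely the content of Theorem \ref{A}, so the decomposition step as written already presupposes the conclusion.

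Your third paragraph correctly identifies the real obstruction, and this is in fact the route the paper takes: assume (by compactness of $T^1M$) that distinct closed flat orbits or strips accumulate at some $x\in\Lambda$, and derive a contradiction. But the bare Arzel\`a--Ascoli observation does not close the argument; the phrase ``contradicting their isolation'' is circular since isolation is what one is trying to prove. What is missing is the expansivity mechanism and the area estimate. Concretely, the paper argues: take $x'_k\to x$ with $x'_k\notin\mathcal{O}(x)$; since distinct closed orbits cannot be forward-asymptotic (Lemma \ref{flat closed geodesic0}), there is a first time $s_k\to\infty$ with $d(\Phi^{s_k}(x'_k),\Phi^{s_k}(x))=\epsilon_0$; passing to limits yields distinct vectors $a\neq b$ with $d(\Phi^t(a),\Phi^t(b))\le\epsilon_0$ for all $t\le 0$, both flat and both periodic by hypothesis. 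Two \emph{distinct} closed flat geodesics that are negatively asymptotic contradict Lemma \ref{flat closed geodesic0}, whose own proof hinges on Ruggiero's Theorem \ref{ideal triangle} that an ideal triangle with a flat side has infinite area. None of these ingredients --- the expansivity step, the "first separation time" construction, or the area argument --- appear in your sketch, and without them the accumulation cannot actually be ruled out.
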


\begin{theorem}\label{B}
If $\Lambda \cap (\text{Per\ } (\Phi))^c \neq \emptyset$, then there exist $y, z \in \Lambda$, $y\notin \mathcal{O} (z)$, such that
$$d(\Phi^t(y), \Phi^t(z))\to 0,  \ \ \text{as\ } t\to +\infty.$$
\end{theorem}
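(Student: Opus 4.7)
The plan is to pass to the universal cover $\tilde M$, analyze the compact $\Phi^t$-invariant set $\overline{\mathcal O(x)}\subset\Lambda$, and split according to whether it contains a closed flat orbit or consists only of non-periodic orbits.

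Pick $x\in\Lambda\cap(\mathrm{Per}\,\Phi)^c$. The set $\mathcal C:=\overline{\mathcal O(x)}\subset\Lambda$ contains, by Zorn's lemma, a minimal invariant subset $\mathcal M$. Suppose first that $\mathcal M=\mathcal O(w)$ is a closed flat orbit. After possibly reversing time, $w\in\omega(x)$ and $d_{T^1M}(\Phi^t(x),\mathcal O(w))\to 0$. Lifting $\tilde\gamma=\tilde\gamma_{\tilde x}$ and a lift $\tilde\alpha$ of $\gamma_w$, properness of the $\pi_1(M)$-action makes distinct translates $g\tilde\alpha$ uniformly separated in $T^1\tilde M$; hence for $t$ large $\tilde\gamma'(t)$ is near a single translate $g\tilde\alpha$, and continuity of $\Phi^t$ forces a constant offset $c$ with $d_{T^1\tilde M}(\tilde\gamma'(t),g\tilde\alpha'(t+c))\to 0$. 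Projecting back, $y:=x$ together with $z$ defined as the projection of $g\tilde\alpha'(c)$ are both in $\Lambda$, on distinct orbits ($x$ non-periodic, $z$ periodic), and satisfy $d(\Phi^t(y),\Phi^t(z))\to 0$.

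Assume now $\mathcal M$ contains no periodic orbit. Replacing $x$ by a point of $\mathcal M$, we may suppose $\overline{\mathcal O(x)}=\mathcal M$ is minimal and non-periodic; then $x$ is recurrent, giving $t_n\to+\infty$ and $g_n\in\pi_1(M)$ with $g_n\tilde\gamma'(t_n)\to\tilde x$ in $T^1\tilde M$, the $g_n$ pairwise distinct along a subsequence (else $\tilde\gamma'(t_n)$ accumulates in $T^1\tilde M$, contradicting $d_{\tilde M}(\tilde\gamma(0),\tilde\gamma(t_n))=t_n\to\infty$). The distinct flat geodesics $g_n\tilde\gamma$ converge geometrically to $\tilde\gamma$, so their endpoints $g_n\tilde\gamma(\pm\infty)\in\partial_\infty\tilde M$ converge to $\tilde\gamma(\pm\infty)$. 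More generally, for each $y\in\mathcal M\setminus\mathcal O(x)$ the same recipe produces a flat limit geodesic $\tilde\sigma$, not $\pi_1(M)$-equivalent to $\tilde\gamma$, approached geometrically by distinct $g_n\tilde\gamma$.

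The main obstacle is converting this geometric convergence into the sharing of a forward endpoint at infinity, which is what forward asymptoticity in $\tilde M$ demands. My plan is to combine a pigeonhole argument on the distinct $g_n\tilde\gamma(+\infty)$, using the structure of boundary stabilizers in cocompact nonpositive curvature (trivial or infinite cyclic generated by a hyperbolic translation), with the flat strip theorem to handle the degenerate case where the candidate pair bounds a flat strip; in that case one uses the absence of a flat plane in $\tilde M$ (forced by Gauss--Bonnet since $M$ has genus $\geq 2$) to slide to the strip's boundary and replace the pair by lifts whose $T^1\tilde M$-distance tends to zero. Once such a pair of flat asymptotic geodesics on distinct $\Phi^t$-orbits in $T^1M$ is produced, the triviality of the stabilizer of $\tilde\gamma$ and the distinctness of the $g_n$ ensure the orbits are indeed different.
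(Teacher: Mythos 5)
Your overall strategy --- passing to a minimal invariant subset $\mathcal M\subset\overline{\mathcal O(x)}$ and splitting on whether it is a periodic orbit --- is a genuinely different route from the paper's (the paper starts with a volume/pigeonhole argument to find close returns $\Phi^{t_k}(x),\Phi^{t'_k}(x)$, pushes them forward until they separate, and takes limits). But both cases of your proposal have concrete gaps.

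\textbf{Case 1.} You claim that if $\mathcal M=\mathcal O(w)$ is a closed orbit then $d(\Phi^t(x),\mathcal O(w))\to 0$. That does not follow: $w\in\omega(x)$ and $\mathcal M$ minimal in $\overline{\mathcal O(x)}$ only give $\mathcal O(w)\subset\omega(x)$, not $\omega(x)=\mathcal O(w)$. So the shadowing step collapses. (In fact the paper's Lemma~\ref{flat closed geodesic} shows this case is \emph{vacuous}: a flat aperiodic orbit cannot have a periodic point in its $\omega$-limit set --- but that lemma is proved via Ruggiero's ideal-triangle area theorem, which your proposal does not invoke.)

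\textbf{Case 2.} You explicitly flag ``the main obstacle is converting this geometric convergence into the sharing of a forward endpoint at infinity'' and then offer only a plan (pigeonhole on $g_n\tilde\gamma(+\infty)$, boundary stabilizer structure, ``slide to the strip's boundary''). This is precisely the heart of the theorem and it is not carried out. Even granting that you could extract two distinct flat geodesics with the same forward endpoint, you still need $d(\Phi^t y,\Phi^t z)\to 0$ rather than merely boundedness or convergence to a positive constant; the paper rules out the positive-constant alternative using Lemma~\ref{boundary} (asymptotic pair limiting to flat-strip boundaries) together with the Cao--Xavier theorem (Theorem~\ref{flat strip closed}, flat strips consist of closed geodesics) and then Lemma~\ref{flat closed geodesic0}. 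None of that machinery appears in your sketch, and the ``slide to the boundary using absence of a flat plane'' step is not an argument --- if the pair bounds a flat strip, the distance is \emph{constant}, and you need Cao--Xavier to turn this into a contradiction with the no-periodic-orbit hypothesis. Finally, the pigeonhole on endpoints: distinct $g_n\tilde\gamma(+\infty)$ converging to $\tilde\gamma(+\infty)$ does not by itself yield two translates with the \emph{same} endpoint; you would need an additional argument, and the stabilizer structure you cite (trivial or infinite cyclic) only tells you something once you already have two translates sharing an endpoint. In short, the proposal identifies the correct difficulty but does not overcome it, and it is missing the area-of-ideal-triangles input and the Cao--Xavier closing lemma on which the paper's proof is built.
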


In the process of proving the above two theorems, we obtain a result of independent importance:
\begin{theorem}\label{C}
$\Lambda \cap (\text{Per\ } (\Phi))^c$ is a closed set in $\Lambda$.
\end{theorem}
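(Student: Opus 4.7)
My plan is to prove the equivalent open statement: $\mathrm{Per}(\Phi) \cap \Lambda$ is open in $\Lambda$. Fix $x \in \mathrm{Per}(\Phi) \cap \Lambda$ of period $T$ and suppose $\{x_n\} \subset \Lambda$ satisfies $x_n \to x$; I will argue that $x_n \in \mathrm{Per}(\Phi)$ for all sufficiently large $n$. The first step is to lift to the universal cover $\tilde M$: choose $\tilde x_n \to \tilde x$ in $T^1 \tilde M$ and let $\tilde\gamma_n, \tilde\gamma$ denote the associated geodesics, each flat by definition of $\Lambda$. The closed geodesic $\gamma_x$ corresponds to a hyperbolic deck transformation $\alpha \in \pi_1(M)$ with axis $\tilde\gamma$ and translation length $T$. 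Denote by $F(\alpha) := \{\tilde y \in \tilde M : d(\tilde y, \alpha \tilde y) = T\}$ the minimum displacement set of $\alpha$, which by standard CAT(0) theory is a closed, convex, $\alpha$-invariant subset isometric to a Euclidean strip $[0,a]\times\mathbb{R}$ for some $a \geq 0$, with $\tilde\gamma$ one of its $\mathbb{R}$-fibers.

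The decisive step is to show that for all large $n$, $\tilde\gamma_n$ lies in $F(\alpha)$ as a geodesic parallel to $\tilde\gamma$. Granting this, $\alpha$ acts on $F(\alpha)$ by translation of length $T$ along the $\mathbb{R}$-direction, so $\alpha \cdot \tilde\gamma_n = \tilde\gamma_n$ setwise and $\alpha(\tilde\gamma_n(s)) = \tilde\gamma_n(s+T)$; projecting to $M$ then shows $\gamma_{x_n}$ is a closed geodesic of period $T$, giving $x_n \in \mathrm{Per}(\Phi)$. To produce the flat strip structure, I use that the function $t \mapsto d(\tilde\gamma_n(t), \tilde\gamma(t))$ is convex (by nonpositive curvature) and small at $t = 0$; if it is bounded on all of $\mathbb{R}$, the flat strip theorem yields a flat strip bounded by $\tilde\gamma_n$ and $\tilde\gamma$, which in dimension two forces $\tilde\gamma_n \subset F(\alpha)$ parallel to $\tilde\gamma$.

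The central obstacle is therefore to show that $d(\tilde\gamma_n(t), \tilde\gamma(t))$ remains bounded for all $t \in \mathbb{R}$. My plan is to exploit the strong constraint that $\tilde\gamma_n$ is a flat geodesic: its image lies in the closed zero-curvature set $\tilde\pi(\tilde\Lambda) \subset \tilde M$, and the connected component of this set meeting a small neighborhood of $\tilde\gamma(0)$ must, for $n$ large, coincide with the flat component containing $\tilde\gamma$. When $a > 0$, this flat component is $F(\alpha)$, and any geodesic in $F(\alpha) \cong [0,a]\times\mathbb{R}$ whose velocity is not parallel to the $\mathbb{R}$-direction would exit $F(\alpha)$ in finite time through its lateral boundary into a region of strictly negative curvature, violating flatness of $\tilde\gamma_n$; hence the velocity of $\tilde\gamma_n$ must be parallel to $\tilde\gamma'(0)$, which yields the claim.

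The degenerate case $a = 0$, where $\tilde\gamma$ is isolated among parallel flat geodesics, is the more delicate one: one must rule out flat geodesics in $\tilde M$ approaching $\tilde\gamma$ transversally or from extraneous flat components. Handling this case carefully---essentially establishing a local version of the decomposition asserted in Theorem~\ref{A} without assuming its global hypothesis $\Lambda \subset \mathrm{Per}(\Phi)$---is, I expect, the main technical hurdle of the argument. One natural approach is to argue that any 2-dimensional flat component of $\tilde\pi(\tilde\Lambda)$ meeting a neighborhood of $\tilde\gamma(0)$ projects to a flat cylinder in $M$ whose core geodesic is an axis of some deck transformation, reducing the configuration to the nondegenerate setting already handled.
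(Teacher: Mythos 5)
Your overall strategy (prove the complementary openness of $\mathrm{Per}(\Phi)\cap\Lambda$ in $\Lambda$) is a legitimate reformulation, and the CAT(0) setup with the minimum displacement set $F(\alpha)$ is standard. However, your argument has genuine gaps, and they are located precisely at the heart of what the paper's proof is designed to overcome.

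First, you already concede that the case $a=0$ is unresolved. This is not a degenerate afterthought: an isolated closed flat geodesic (one of the two pieces of the decomposition in Theorem~\ref{A}) has $a=0$, so your argument as written does not cover a generic and unavoidable situation. Your proposed fix --- reducing two-dimensional flat components of $\tilde\pi(\tilde\Lambda)$ to cylinders around axes --- would at best handle $\tilde\gamma_n$ approaching $\tilde\gamma$ through a two-dimensional flat neighborhood, but says nothing about a one-dimensional flat geodesic $\tilde\gamma_n$ that starts close to $\tilde\gamma(0)$ and then drifts away; ruling this out is exactly the content of the theorem, so at this point the argument is close to circular (``establishing a local version of the decomposition asserted in Theorem~\ref{A}'' is not a technical hurdle: it \emph{is} the theorem).

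Second, even the nondegenerate case $a>0$ has an unjustified step. You assert that a geodesic in $F(\alpha)\cong[0,a]\times\mathbb{R}$ with velocity not parallel to the $\mathbb{R}$-direction ``would exit $F(\alpha)$ in finite time through its lateral boundary into a region of strictly negative curvature.'' Nothing forces the curvature to be strictly negative immediately outside $F(\alpha)$. The minimum displacement set is the maximal \emph{strip of axes of $\alpha$}, not the maximal zero-curvature region; there could be zero-curvature points, and even whole flat geodesics, beyond the lateral boundary. Likewise, the claim that the connected component of $\tilde\pi(\tilde\Lambda)$ near $\tilde\gamma(0)$ ``coincides with the flat component containing $\tilde\gamma$'' (and equals $F(\alpha)$) is asserted, not proved, and is again essentially what needs to be shown.

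The key ingredient you are missing is Ruggiero's infinite-area theorem for ideal triangles with a flat side (Theorem~\ref{ideal triangle}), which underlies Lemma~\ref{flat closed geodesic0}: a flat geodesic forward-asymptotic to a closed geodesic must coincide with it. The paper's proof of Theorem~\ref{C} does not try to bound $d(\tilde\gamma_n(t),\tilde\gamma(t))$ directly. Instead, it runs the escape-time/limit-extraction argument of Lemma~\ref{flat closed geodesic}: if some $y_k\to z$ with $z$ periodic and $y_k$ not, one chooses the first time $l_k$ at which $\Phi^{l_k}(y_k)$ leaves the $\epsilon_0$-tube of $\Phi^{l_k}(z)$, passes to subsequential limits $y^\pm,z^\pm$ lying on stable/unstable leaves at distance exactly $\epsilon_0$, and then invokes Lemma~\ref{flat closed geodesic0} and the flat-strip enlargement argument to reach a contradiction. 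Ruggiero's theorem is what makes that last step work; without a substitute for it, your geometric argument cannot close.
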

Theorem \ref{C} says that if we count a flat strip as a single orbit then closed flat orbits must be isolated from non-closed flat orbits.

Now let $\{p\in M: K(p)<0\}$ be the set of points with negative curvature on $M$. As a consequence of Theorem \ref{A} and \ref{B}, we can prove the Conjecture \ref{conjecture} in the case when $\{p\in M: K(p)<0\}$ has only finitely many connected components:

\begin{theorem}\label{Main}
If the set $\{p\in M: K(p)<0\}$ has finitely many connected components, then $\Lambda \subset \text{Per}(\Phi)$. In particular, the geodesic flow is ergodic.
\end{theorem}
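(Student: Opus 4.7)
The plan is to prove, under the finite-components hypothesis, that $\Lambda \subset \text{Per}(\Phi)$; the ergodicity conclusion then follows directly from Theorem~\ref{A} combined with the ergodicity on $\Lambda^c$ established in the introductory lemma.

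I argue by contradiction. Suppose $\Lambda \cap (\text{Per}(\Phi))^c \neq \emptyset$. Theorem~\ref{B} produces $y,z \in \Lambda$ with $y \notin \mathcal{O}(z)$ and $d(\Phi^t y, \Phi^t z) \to 0$ as $t \to +\infty$. Lifting to the universal cover $\tilde M$, I choose lifts $\tilde\gamma_y, \tilde\gamma_z$ of the underlying geodesics so that $D(t) := d_{\tilde M}(\tilde\gamma_y(t), \tilde\gamma_z(t)) \to 0$. Since $\tilde M$ is a CAT$(0)$ space, $D$ is convex, hence monotonically nonincreasing to zero.

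Let $R$ be the closed region in $\tilde M$ bounded by the rays $\tilde\gamma_y|_{[0,\infty)}$, $\tilde\gamma_z|_{[0,\infty)}$ and a geodesic segment $\sigma_0$ joining their starting points. For $T > 0$ let $\sigma_T$ be the geodesic segment from $\tilde\gamma_y(T)$ to $\tilde\gamma_z(T)$, truncating $R$ to a quadrilateral $R_T$. Applying Gauss--Bonnet to $R_T$ and letting $T \to \infty$, I aim to deduce $\int_R K\, dA < 0$: were the integral zero, then $K \equiv 0$ on $R$, so $R$ would embed isometrically into the Euclidean plane, where distinct asymptotic geodesic rays with $D(t) \to 0$ do not exist. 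Consequently $\pi(R)$ meets the negative-curvature region $\{K<0\} = \bigsqcup_{i=1}^n U_i$.

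The finite-components hypothesis enters in the following key assertion: there exists a threshold $T_0$ such that $\sigma_t \subset \tilde F := \pi^{-1}(F)$ for every $t \ge T_0$. Granted this, the subregion of $R$ beyond parameter $T_0$ is flat and therefore isometrically embeds into the Euclidean plane; but the two distinct rays $\tilde\gamma_y|_{[T_0,\infty)}, \tilde\gamma_z|_{[T_0,\infty)}$ with $D(t) \to 0$ cannot coexist there, a contradiction. Thus $\Lambda \subset \text{Per}(\Phi)$. Theorem~\ref{A} then writes $\Lambda$ as a finite union of one-dimensional periodic orbits and two-dimensional sets of vectors tangent to flat strips; each summand has zero Liouville measure in the three-dimensional $T^1 M$, so $\nu(\Lambda) = 0$. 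Combined with ergodicity on $\Lambda^c$ (via the introductory lemma and Pesin's ergodicity on $\Delta$), this yields ergodicity of the geodesic flow on $T^1 M$.

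\textbf{Main obstacle.} The decisive step is the existence of the threshold $T_0$. Intuitively, because $|\sigma_t| = D(t) \to 0$ while each $U_i$ has positive $M$-diameter, a sufficiently short transversal cannot cross any lift of $U_i$. The subtle case arises when $\tilde\gamma_y$ or $\tilde\gamma_z$ travels along the boundary of some lift of a $U_i$ for long stretches; ruling out non-flat transversal geometry in the tail, and thereby establishing a uniform $T_0$, requires both the finiteness of $\{U_1,\dots,U_n\}$ and a careful geometric analysis of the boundary curves of $F$. This is where the technical heart of the argument lies.
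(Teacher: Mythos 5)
Your argument is essentially the paper's own: invoke Theorem~\ref{B} to get $y,z\in\Lambda$ with $d(\Phi^t y,\Phi^t z)\to 0$, lift to $\tilde M$, argue that because $\{K<0\}$ has finitely many components each with inradius bounded below while the ideal triangle between $\tilde\gamma_y,\tilde\gamma_z$ narrows to zero width, the tail of that triangle must be flat, and then use the flatness (Jacobi fields constant) to force $d(\Phi^t y,\Phi^t z)$ to be eventually constant, contradicting the decay to zero. The ergodicity conclusion via Theorem~\ref{A} plus $\nu(\Lambda)=0$ plus Pesin is also exactly what the paper does. Two small remarks: the Gauss--Bonnet detour and the claim $\int_R K\,dA<0$ are not used in your eventual argument and can be dropped; and the ``main obstacle'' you flag --- that a single lifted component could snake toward the ideal vertex $w$ even though its inscribed ball sits at bounded distance --- is precisely the step the paper itself treats informally (``the connected components on $\tilde M$ cannot approach $w$ inside the ideal triangle''), so you have correctly isolated the technical heart rather than identified a divergence from the paper. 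To close that step one would observe that an unbounded lift accumulating at $w$ would carry a deck transformation fixing $w$, forcing $\tilde\gamma_y$ to be asymptotic to the axis of a closed geodesic and hence, by Lemma~\ref{flat closed geodesic0} and Lemma~\ref{boundary}, periodic --- contradicting the aperiodicity from Proposition~\ref{prop2}.
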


Theorem \ref{Main} gives a negative answer to Question 6.2.1 asked by Burns in a recent survey \cite{BM}, for the case when $\{p\in M: K(p)<0\}$ has only finitely many connected components. Furthermore, by Theorem \ref{A} there are at most finitely many flat strips and isolated closed flat geodesics in this case. But we don't know the answer to Question 6.2.1 in \cite{BM} for the general case.

The paper is organized as follows. In section 2, we present some preliminaries and well known results. The proofs of Theorems \ref{A}, \ref{B} and \ref{C} will occupy Section 3. In Section 4, we prove Theorem \ref{Main} and ask a further related question.

\section{Preliminaries}

\subsection{Universal Cover}
Consider the universal covering space $\tilde{M}$ of $M$, which can be identified with the unit disk in the plane. The lifting of a geodesic $\gamma$ from $M$ to $\tilde{M}$ is denoted as $\tilde{\gamma}$. All the geodesics are supposed to have unit speed. It is well known that $\tilde{M}$ is a Hadamard manifold with many nice properties. For any two given points in $\tilde{M}$, there exists a unique geodesic joining them. Two geodesics $\tilde{\gamma_1}$ and $\tilde{\gamma_2}$ are said to be asymptotes if $d(\tilde{\gamma_1}(t), \tilde{\gamma_2}(t)) \leq C$ for some $C>0$ and $\forall t>0$. The asymptotes relation is an equivalence relation. Denote by $\tilde{M}(\infty)$ the set of all equivalence classes, which can be identified with the boundary of the unit disk. We denote $\tilde{\gamma}(+\infty)$ for the asymptote class of the geodesic $\tilde{\gamma}$, and $\tilde{\gamma}(-\infty)$ for the one of the reversed geodesic to $\gamma$.

Any closed geodesic $\gamma$ in $M$ can be lifted to a geodesic $\tilde{\gamma}$ on $\tilde{M}$, such that
$$\tilde{\gamma}(t+t_0)=\phi(\tilde{\gamma}(t)), \ \ \forall t \in \mathbb{R}$$
for some $t_0 >0$ and $\phi \in \pi_1(M)$. In this case, we say $\phi$ fixes $\tilde{\gamma}$ , i.e., $\phi(\tilde{\gamma})=\tilde{\gamma}$. Then $\phi$ acts on $\tilde{M}(\infty)$ in the natural way and fixes exactly two points $\tilde{\gamma}(\pm \infty)$. Moreover for any $x\in \tilde{M}(\infty)$ and $x \neq \tilde{\gamma}(\pm \infty)$, we have $\lim_{n\to +\infty}\phi^n(x)= \tilde{\gamma}(+\infty) $ and $\lim_{n\to -\infty}\phi^n(x)= \tilde{\gamma}(-\infty)$.

There are two continuous one dimensional distributions $E^s$ and $E^u$ on $T^1M$ which are invariant under the derivative of $\Phi^t$ (cf. \cite{E2}). Their integral manifolds form foliations $W^s$ and $W^u$ of $T^1M$ respectively which are invariant under $\Phi^t$, known as the stable and unstable horocycle foliations. The lifting of $W^s$ and $W^u$ to $T^1\tilde{M}$ are denoted as $\tilde{W}^s$ and $\tilde{W}^u$ respectively. If $w \in \tilde{W}^s(v)$, then geodesics $\tilde{\gamma}_v(t)$ and $\tilde{\gamma}_w(t)$ are asymptotic.

\subsection{Area of ideal triangles}
Given $x,y,z \in \tilde{M}(\infty)$, an ideal triangle with vertices $x, y, z$ means the region in $\tilde{M}$ bounded by the three geodesics joining $x$ and $y$, $y$ and $z$, $z$ and $x$. It is an interesting topic to study the area of ideal triangles. We have the following theorem due to Rafael Oswaldo Ruggiero(\cite{RR}):

\begin{theorem}\label{ideal triangle}
If $K(\tilde{\gamma}(t))\equiv 0$, for $\forall t\in \mathbb{R}$, then every ideal triangle having $\tilde{\gamma}(t)$ as an edge has infinite area.
\end{theorem}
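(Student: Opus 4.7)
The plan is to show that the cusp of $T$ near the ideal vertex $y := \tilde{\gamma}(+\infty)$ already has infinite area. Let $z$ be the third ideal vertex and let $\sigma$ be the geodesic from $z$ to $y$, so that $\sigma$ and $\tilde{\gamma}$ are asymptotic. The central ingredient is that, along $\tilde{\gamma}$, the geodesic curvature $u$ of the stable horocycles centered at $y$ vanishes identically. Indeed $u \geq 0$ on $\tilde{M}$ (convexity of Busemann functions in nonpositive curvature), $u$ is bounded (since $|K|$ is bounded on the compact $M$), and along every geodesic $u$ satisfies the Riccati equation $u' + u^2 + K = 0$; along $\tilde{\gamma}$ this becomes $u' = -u^2$, and any initial value $u(0) = u_0 > 0$ yields $u(t) = u_0/(1+u_0 t)$, which blows up at $t = -1/u_0$, contradicting boundedness. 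Hence $u \equiv 0$ on $\tilde{\gamma}$, and the transverse geometry near $\tilde{\gamma}$ is flat to high order.

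I then pass to Fermi coordinates $(t,s)$ along $\tilde{\gamma}$, in which the metric reads $g = G(t,s)^2\,dt^2 + ds^2$ with $G$ solving $\partial_s^2 G + KG = 0$, $G(t,0) = 1$, $\partial_s G(t,0) = 0$. Because $K$ attains its global maximum $0$ along $\tilde{\gamma}$ one also has $\partial_s K(t,0) = 0$ and $\partial_s^2 K(t,0)\leq 0$, so Taylor expansion gives $G(t,s) = 1 + O(s^4)$ and $G\,\partial_s G = O(s^3)$ uniformly in $t$ (by compactness of $M$). For $t$ large I write $\sigma(t) = (t,s(t))$ in the Fermi chart; the geodesic equation reduces to $s''(t) = G\,\partial_s G\cdot \dot{t}^2$, yielding $0\leq s''(t)\leq C\, s(t)^3$ for a uniform constant $C$, so $s$ is convex, bounded, and non-increasing.

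If $s(t)\not\to 0$, then $s(t)\to L>0$ and the strip $\{(t,s):t\geq t_0,\,0\leq s\leq L/2\}\subset T$ already has infinite area. Otherwise $s(t)\to 0$ and $s'(t)\to 0$; multiplying $s''\leq Cs^3$ by $s'\leq 0$ and integrating from $t$ to $\infty$ gives $(s'(t))^2 \leq \tfrac{C}{2} s(t)^4$, so $-s'(t)/s(t)^2 \leq \sqrt{C/2}$, and a further integration yields $1/s(t) \leq 1/s(t_0) + \sqrt{C/2}(t-t_0)$, hence $s(t)\geq c/t$ for some $c>0$ and all large $t$. Since $\partial_s^2 G = -KG\geq 0$ and $\partial_s G(t,0) = 0$ force $G\geq 1$, one has $\mathrm{Area}(T) \geq \int_{t_0}^\infty\!\int_0^{s(t)} ds\,dt = \int_{t_0}^\infty s(t)\,dt = +\infty$. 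The main obstacle I expect is rigorously treating the marginal decay rate $s(t)\sim 1/t$: this genuinely uses the cubic bound $s''\leq Cs^3$ coming from the high-order vanishing of $\partial_s G$ along $\tilde{\gamma}$, rather than any cruder estimate, and one must also use absence of focal points in nonpositive curvature to keep the Fermi chart valid along all of $\sigma$ for large $t$.
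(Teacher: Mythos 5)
Your argument is essentially sound, and it achieves the $1/t$ lower bound for the Fermi normal coordinate $s(t)$ that is the crux of the matter: the cubic right-hand side $s''\lesssim s^3$ (coming from $\partial_s G(t,0)=\partial_s^2 G(t,0)=\partial_s^3 G(t,0)=0$) is precisely what prevents exponential collapse and forces $\int s(t)\,dt = +\infty$. The paper itself does not prove Theorem~\ref{ideal triangle}; it cites Ruggiero and offers only the one-line hint that stable Jacobi fields shrink slowly along a flat geodesic. Your Fermi-coordinate route is a legitimate, somewhat more elementary alternative to a Jacobi-field bookkeeping argument, and it cleanly covers the half-ideal variant with two finite vertices stated just after the theorem, since only the cusp at $y$ matters.

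Two points where the write-up is looser than it should be. First, the Riccati observation $u\equiv 0$ along $\tilde{\gamma}$, while true and good intuition, is never actually invoked: the bounds $G=1+O(s^4)$ and $\partial_s G=O(s^3)$ follow directly from $K(t,0)\equiv 0$ together with $\partial_s K(t,0)=0$ (because $K\leq 0$ attains its maximum on $\tilde{\gamma}$) via $\partial_s^2 G=-KG$, $G(t,0)=1$, $\partial_s G(t,0)=0$ and the smoothness/compactness of $M$. So the "central ingredient" label on $u\equiv 0$ is misleading; the central ingredient is the second-order tangency of $K$ to zero along $\tilde{\gamma}$. Second, the equation $s''(t)=G\,\partial_s G\cdot\dot t^2$ conflates the affine-parameter derivative with $d^2s/dt^2$. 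Reparametrizing the geodesic ODE by $t$ gives
$$
\frac{d^2s}{dt^2}=G\,\partial_s G+\frac{ds}{dt}\cdot\frac{\partial_t G}{G}+2\,\frac{\partial_s G}{G}\left(\frac{ds}{dt}\right)^2,
$$
and one must check that the two extra terms are also $O(s^3)$ for large $t$; they are, since $\partial_t G(t,s)=O(s^3)$ by the same Taylor expansion (using $\partial_t K(t,0)=0$) and $ds/dt$ is bounded. Also, the sign $s'\leq 0$ should be deduced from convexity and boundedness of the distance function $d(\sigma(\cdot),\tilde{\gamma})$ in arc length (standard in nonpositive curvature) rather than from $s''\geq 0$ in the $t$-parameter, which the extra terms do not quite guarantee. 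With these repairs, the differential inequality $s'\leq 0$, $s''\leq Cs^3$, $s,s'\to 0$ yields $(s')^2\leq\tfrac{C}{2}s^4$ and then $s(t)\gtrsim 1/t$ as you wrote, so the proof closes.
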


In fact, if we have a triangle with vertices $x$, $a$, $b$, where $x=\tilde{\gamma}_1(+\infty)=\tilde{\gamma}_2(+\infty)$, $a \in \tilde{\gamma}_1$, $b\in \tilde{\gamma}_2$, and $\tilde{\gamma}_1$ is a flat geodesic, then the triangle has infinite area. The proof follows from the fact that the length of stable Jacobi fields decreases slowly along a geodesic with curvature close to zero.

\subsection{Flat strips}
A flat strip means a totally geodesic isometric imbedding $r:\mathbb{R}\times [0,c]\rightarrow \tilde{M}$, where $\mathbb{R}\times [0,c]$ is a strip in an Euclidean plane. We have the following Flat Strip Lemma due to Eberlein and O'Neill:
(\cite{EO})
\begin{lemma} \label{flat strip lemma}
If two distinct geodesics $\tilde{\alpha}$ and $\tilde{\beta}$ satisfy $d(\tilde{\alpha}(t), \tilde{\beta}(t)) < C$ for some $C>0$ and $\forall t\in \mathbb{R}$, then they are the boundary curves of a flat strip in $\tilde{M}$.
\end{lemma}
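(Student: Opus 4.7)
The plan is to exploit the convexity of the distance function between geodesics in a Hadamard manifold, combined with the boundedness hypothesis, to force the region swept by cross-sectional geodesics to be totally geodesic and flat.

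First I would show that $\varphi(t) := d(\tilde\alpha(t), \tilde\beta(t))$ is convex on $\mathbb{R}$. This is a standard consequence of $K \leq 0$: via the second variation of arc length, the family of minimizing segments joining $\tilde\alpha(t)$ to $\tilde\beta(t)$ yields a transverse Jacobi field whose norm squared is convex in $t$, and integrating gives convexity of $\varphi$. Since $\varphi$ is bounded above by $C$ and convex on all of $\mathbb{R}$, it must be constant, because any nonconstant convex function on $\mathbb{R}$ grows to $+\infty$ in at least one direction. Set $D := \varphi(t) > 0$, where positivity follows since distinct geodesics in a Hadamard manifold cannot remain at zero distance.

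Next I would construct the candidate strip explicitly. For each $t$, let $\sigma_t : [0,D] \to \tilde M$ be the unique unit-speed geodesic joining $\tilde\alpha(t)$ to $\tilde\beta(t)$, and set $r(t,s) := \sigma_t(s)$. Applying the first variation of arc length to $\varphi$ and also to the auxiliary distances $t \mapsto d(\tilde\alpha(t), \tilde\beta(t+a))$ (each bounded and convex, hence constant in $t$ by the same argument) forces $\sigma_t$ to meet $\tilde\alpha$ and $\tilde\beta$ orthogonally. The variation field $J_t(u) := (\partial r/\partial t)(t,u)$ is then a Jacobi field along $\sigma_t$ with unit boundary values perpendicular to $\sigma_t'$; by $K \leq 0$, $|J_t|^2$ is convex in $u$, hence $|J_t| \leq 1$ throughout $[0,D]$. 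Comparing with the separation between $r(t_1,u)$ and $r(t_2,u)$ via a CAT($0$) quadrilateral estimate applied to the vertices $\tilde\alpha(t_1), \tilde\alpha(t_2), \tilde\beta(t_2), \tilde\beta(t_1)$ (whose four side lengths already match a Euclidean rectangle) pins down $|J_t| \equiv 1$. Plugging this equality into the Jacobi equation forces the sectional curvature on $J_t \wedge \sigma_t'$ to vanish pointwise, so $r$ is a totally geodesic isometric imbedding of the Euclidean strip $\mathbb{R}\times[0,D]$.

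The main obstacle is the last rigidity step: bridging the scalar constancy of $\varphi$ with full two-dimensional flatness of the strip. Obtaining convexity and constancy of $\varphi$ is essentially elementary convexity calculus, but turning this into pointwise vanishing of the curvature requires simultaneously extracting every equality case in the underlying inequalities — those for the Jacobi field norm and for the Euclidean quadrilateral comparison — and feeding them through the Jacobi equation. This is where all the geometric content of the lemma resides.
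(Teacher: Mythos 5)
The paper does not prove this lemma at all; it simply cites Eberlein and O'Neill \cite{EO}, so there is no in-paper argument to compare you against, and I am evaluating your outline on its own. Its overall shape is the standard one for the Flat Strip Theorem, and the opening steps are fine: $\varphi(t)=d(\tilde\alpha(t),\tilde\beta(t))$ is convex in $t$ on a Hadamard manifold, a bounded convex function on $\mathbb{R}$ is constant, and the constant $D$ is positive because $\varphi\equiv 0$ would force $\tilde\alpha=\tilde\beta$.

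There is a gap in the orthogonality step. Constancy in $t$ of each $\psi_a(t):=d(\tilde\alpha(t),\tilde\beta(t+a))$, fed into the first-variation formula, does \emph{not} by itself force $\sigma_t$ to meet $\tilde\alpha$ and $\tilde\beta$ perpendicularly. Already in $\mathbb{R}^2$ take $\tilde\alpha(t)=(t,0)$ and $\tilde\beta(t)=(t+1,1)$: every $\psi_a$ is constant in $t$, yet each $\sigma_t$ meets both lines at $45^\circ$. What constancy of $\varphi$ actually yields is $\langle\tilde\alpha'(t),\sigma_t'(0)\rangle=\langle\tilde\beta'(t),\sigma_t'(D)\rangle$, not that this common value vanishes. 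To make it vanish you must first reparametrize $\tilde\beta$ by a translation so that the convex, proper function $a\mapsto d(\tilde\alpha(0),\tilde\beta(a))$ attains its minimum at $a=0$, and only then run the first-variation argument. Since the conclusion concerns the two geodesics as subsets, this reparametrization is harmless, but it has to be done.

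There is a second, more serious gap in the rigidity step. The fact that the four side lengths of $\tilde\alpha(t_1)\,\tilde\alpha(t_2)\,\tilde\beta(t_2)\,\tilde\beta(t_1)$ match those of a Euclidean rectangle does not, via any CAT$(0)$ comparison, produce the lower bound $d(r(t_1,u),r(t_2,u))\ge t_2-t_1$ needed to pin down $|J_t|\equiv 1$: CAT$(0)$ comparison gives \emph{upper} bounds on such cross-distances, and a sheared Euclidean parallelogram already has the same four side lengths as a rectangle, so side lengths carry no rigidity. The correct input is the Flat Quadrilateral Lemma for CAT$(0)$ spaces (Bridson--Haefliger, Chapter II.2), whose hypothesis is that the four interior angles sum to at least $2\pi$. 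It is precisely the orthogonality from the corrected previous step that supplies this, making all four angles $\pi/2$; the lemma then hands you the flat totally geodesic rectangle directly, the Jacobi-field bookkeeping becomes unnecessary, and letting $t_1\to-\infty$, $t_2\to+\infty$ gives the flat strip. Both gaps are fixable, but as written the two central claims are not established.
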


We also use the same name for the projection of a flat strip to $M$. An important progress toward the Conjecture \ref{conjecture} was made by Cao and Xavier(\cite{CX}) on the flat geodesics inside flat strips:

\begin{theorem} \label{flat strip closed}
A flat strip on $M$ consists of closed geodesics in the same homotopy type.
\end{theorem}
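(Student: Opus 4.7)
The plan is to lift the flat strip to the universal cover $\tilde{M}$ and exhibit a nontrivial deck transformation that stabilizes the lifted strip; such a transformation will then act as a Euclidean translation along the strip direction, immediately yielding closed geodesics in a single free homotopy class. Let $S = \tilde{r}(\mathbb{R}\times[0,c])\subset\tilde{M}$ denote the lift, with boundary flat geodesics $\tilde\alpha(t)=\tilde{r}(t,0)$ and $\tilde\beta(t)=\tilde{r}(t,c)$, and interior foliation by parallel flat geodesics $\tilde\gamma_s(t)=\tilde{r}(t,s)$ for $s\in[0,c]$.

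The first step is to produce some $\phi\in\pi_1(M)\setminus\{e\}$ with $\phi(S)=S$. The starting observation is an area discrepancy: the strip $S$ carries infinite Euclidean area, but $M$ is compact and hence of finite area. Since the covering projection $p:\tilde{M}\to M$ is a local isometry, $p|_S$ cannot be injective, for otherwise $\operatorname{Area}(p(S))=\operatorname{Area}(S)=\infty$, contradicting $p(S)\subset M$. Hence there exist $x\neq y$ in $S$ with $p(x)=p(y)$, so $y=\phi(x)$ for a nontrivial $\phi\in\pi_1(M)$, and $\phi(S)$ meets $S$ in an open two-dimensional region. To upgrade $\phi(S)\cap S\neq\emptyset$ to $\phi(S)=S$, I would argue by local-to-global rigidity: at any interior point of $\phi(S)\cap S$, both flat strips endow a common Euclidean neighborhood with a foliation by flat geodesics; if these foliations were transverse at any point, one could construct an ideal triangle with a flat geodesic edge having infinite area by Theorem \ref{ideal triangle}, which cannot embed in the compact surface $M$. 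Hence the two foliations coincide, $S$ and $\phi(S)$ are overlapping parallel strips, and their union is a flat strip that we may enlarge to a maximal flat strip fixed by $\phi$, which we then take to be $S$ itself.

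Once $\phi(S)=S$ is secured, the conclusion follows quickly. Identifying $S$ isometrically with $\mathbb{R}\times[0,c]$, the restriction $\phi|_S$ is a nontrivial orientation-preserving Euclidean isometry of the strip. Replacing $\phi$ by $\phi^2$ if necessary to prevent swapping of the boundary components, $\phi$ acts as the translation $(t,s)\mapsto(t+\tau,s)$ for some $\tau>0$. Every geodesic $\tilde\gamma_s$ is then shifted by $\tau$ along itself, $\phi\tilde\gamma_s(t)=\tilde\gamma_s(t+\tau)$, so its projection $\gamma_s$ to $M$ is a closed geodesic of period $\tau$ representing the free homotopy class of $\phi$. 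Since this class is independent of $s$, all geodesics in the flat strip share the same homotopy type.

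The main obstacle is the rigidity upgrade in the second paragraph. Producing $\phi$ with $\phi(S)\cap S\neq\emptyset$ from the area comparison is elementary, but passing from overlap to set-theoretic equality is delicate: one must exclude transverse crossings of two flat strips in $\tilde{M}$ and then show that overlapping parallel strips combine into a single $\phi$-invariant strip. This is precisely where Theorem \ref{ideal triangle} and the Flat Strip Lemma are essential; everything after is a direct consequence of the rigid structure of Euclidean strip isometries.
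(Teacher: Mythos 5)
The paper does not prove this statement; it is quoted as Theorem~\ref{flat strip closed} from Cao and Xavier \cite{CX}, so there is no internal proof to compare against, and your proposal has to be judged on its own.

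Your overall target is reasonable: producing a nontrivial deck transformation $\phi$ with $\phi(S)=S$ for a lifted (maximal) strip $S$ would indeed force all the strip geodesics to be closed and freely homotopic, and the area discrepancy correctly yields a nontrivial $\phi$ with $\phi(\mathring S)\cap\mathring S\neq\emptyset$. The difficulty is exactly where you say it is, but the argument you offer there does not work. Theorem~\ref{ideal triangle} and its refinement apply to a flat geodesic together with a geodesic \emph{asymptotic} to it at a common ideal endpoint; two transversely crossing flat geodesics have four distinct ideal endpoints and bound a region with a finite vertex, so neither statement applies. And the phrase ``which cannot embed in the compact surface $M$'' does not give a contradiction: the infinite-area ideal triangles of Theorem~\ref{ideal triangle} live in $\tilde M$, which has infinite area. (Compare how the paper actually uses Theorem~\ref{ideal triangle} in Lemma~\ref{flat closed geodesic0}: the contradiction there comes from $\phi$ being area-preserving and squeezing a finite region onto one containing an infinite triangle, not from any embeddability of the triangle in $M$.) So the transverse case is simply not ruled out. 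Separately, even if you knew the foliations agree on the overlap, $\phi(S)$ could be shifted sideways relative to $S$; then $S\cup\phi(S)$ is wider, $\phi(S\cup\phi(S))$ wider still, and you need a compactness/bounded-width argument to stabilize this and produce a genuinely $\phi$-invariant maximal strip before the translation picture kicks in. As written there is a real gap at the heart of the proof, and the cited ingredients are not deployed in a way that closes it.
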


\section{Main Construction}

In this section, we mainly carry out two constructions based on a similar idea. First, we prove Theorem \ref{B} by constructing two points $y,z$ with the required property in the theorem starting from an aperiodic orbit of $x\in \Lambda$. Second, assume the contrary for theorem \ref{A}, i.e., there exist infinitely many periodic orbits, then we can construct an aperiodic orbit starting from them. Both constructions are based on the \emph{expansivity} property (cf. \cite{KH} Definition 3.2.11):

\begin{definition}
$x\in T^1M$ has the expansivity property if there exists a small $\delta_0>0$, such that if $d(\Phi^t(x), \Phi^t(y)< \delta_0$ for $\forall t\in \mathbb{R}$, then $y=\Phi^{t_0}(x)$ for some $t_0$ with $|t_0|<\delta_0$.
\end{definition}

\begin{lemma}\label{expansivity}
If $x$ is not tangent to a flat strip, it has the expansivity property.
\end{lemma}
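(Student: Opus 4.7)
The plan is to argue by contradiction using the universal cover and the Flat Strip Lemma (Lemma \ref{flat strip lemma}). Fix $r_0>0$ smaller than the injectivity radius of $M$ (which is positive by compactness), and choose $\delta_0 \in (0, r_0/2)$.

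Suppose $x$ is not tangent to a flat strip, and let $y \in T^1M$ satisfy $d(\Phi^t(x), \Phi^t(y)) < \delta_0$ for all $t \in \mathbb{R}$. First I would select lifts $\tilde{x}, \tilde{y} \in T^1\tilde{M}$ with $d(\tilde{x}, \tilde{y}) < \delta_0$, which is possible because $\delta_0$ is less than half the injectivity radius. Using the continuity of the geodesic flow and the uniqueness of continuous lifts within the injectivity radius (the nearest lift of $\Phi^t(y)$ to $\Phi^t(\tilde{x})$ varies continuously and must coincide with $\Phi^t(\tilde{y})$), I would promote this to the uniform bound $d(\Phi^t(\tilde{x}), \Phi^t(\tilde{y})) < \delta_0$ for every $t \in \mathbb{R}$. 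Projecting to the base via the 1-Lipschitz footpoint map in the Sasaki metric, the geodesics $\tilde{\gamma}_{\tilde{x}}$ and $\tilde{\gamma}_{\tilde{y}}$ satisfy $d(\tilde{\gamma}_{\tilde{x}}(t), \tilde{\gamma}_{\tilde{y}}(t)) < \delta_0$ for all $t$.

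Now the dichotomy is clear. If $\tilde{\gamma}_{\tilde{x}}$ and $\tilde{\gamma}_{\tilde{y}}$ are distinct unit-speed geodesics, the Flat Strip Lemma produces a flat strip in $\tilde{M}$ bounded by them; its projection is a flat strip in $M$ to which $x$ is tangent, contradicting our assumption. Therefore $\tilde{\gamma}_{\tilde{x}}$ and $\tilde{\gamma}_{\tilde{y}}$ coincide as oriented geodesics — the possibility of opposite orientations is excluded because closeness in the Sasaki metric forces the velocity vectors themselves to be close, hence not antipodal for $\delta_0$ small. Consequently $\tilde{y} = \Phi^{t_0}(\tilde{x})$ for some $t_0 \in \mathbb{R}$, and since the footpoint projection is 1-Lipschitz on $T^1\tilde{M}$ while $\tilde{\gamma}_{\tilde{x}}$ is unit-speed, $|t_0| = d(\pi(\tilde{x}), \pi(\tilde{y})) \leq d(\tilde{x}, \tilde{y}) < \delta_0$. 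Projecting back, $y = \Phi^{t_0}(x)$ with $|t_0| < \delta_0$.

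The main obstacle I anticipate is the lifting step: upgrading the pointwise proximity of $\Phi^t(x)$ and $\Phi^t(y)$ in $T^1M$, valid for all $t \in \mathbb{R}$, to a coherent pair of orbits in $T^1\tilde{M}$ that remain $\delta_0$-close throughout. The danger is that a deck transformation could intervene at some large $|t|$, producing a different lift that stays close while the original lifts drift apart. This is handled precisely by the choice $\delta_0 < r_0/2$, together with a continuity/connectedness argument ensuring that the $t \mapsto \Phi^t(\tilde{y})$ selected at $t=0$ remains the unique nearest lift of $\Phi^t(y)$ to $\Phi^t(\tilde{x})$ for all $t$. Once that technicality is in place, the Flat Strip Lemma delivers the conclusion immediately.
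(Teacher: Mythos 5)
Your proposal is correct and follows essentially the same route as the paper's proof: lift to the universal cover (using that $\delta_0$ is small relative to the injectivity radius so the lifted orbits stay uniformly close), then invoke the Flat Strip Lemma (Lemma \ref{flat strip lemma}) to get a contradiction when the lifted geodesics are distinct. You spell out two points the paper leaves implicit — the continuity/connectedness argument justifying that a single pair of lifts stays $\delta_0$-close for all time, and the coincident-geodesic case yielding $y=\Phi^{t_0}(x)$ with $|t_0|<\delta_0$ — but the underlying idea is identical.
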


\begin{proof}
Assume not. Then for an arbitrarily small $\epsilon >0$ less than the injectivity radius of $M$, there exists $y$ such that $y\notin \mathcal{O} (x)$ and $d(\gamma_x(t), \gamma_y(t))< \epsilon$ for $\forall t\in \mathbb{R}$. By the choice of $\epsilon$, we can lift $\gamma_x(t)$ and $\gamma_y(t)$ to the universal covering $\tilde{M}$ such that
$$d(\tilde{\gamma}_x(t), \tilde{\gamma}_y(t))< \epsilon \text{\ \ for\ } \forall t\in \mathbb{R}.$$

Thus by Lemma \ref{flat strip lemma}, $\tilde{\gamma}_x(t)$ and $\tilde{\gamma}_y(t)$ bound a flat strip. Hence $x$ is tangent to a flat strip, a contradiction.
\end{proof}

We first prove Theorem \ref{B} in the next subsection. Theorem \ref{C} is also proved there. After that we prove Theorem \ref{A}.

\subsection{Proof of Theorem \ref{B}}
Now we assume that $\Lambda \cap (\text{Per\ } \Phi)^c \neq \emptyset$, in other words, there exists an aperiodic orbit $\mathcal{O}(x)$ in $\Lambda$. We will construct $y, z$ as in Theorem \ref{B} starting from $\mathcal{O}(x)$. First we can always find two points on the orbit within a prescribed closeness:

\begin{lemma}\label{closelemma}
For any $k\in \mathbb{N}$, there exists two sequences of $t_k \to +\infty,$ and $t'_k\to +\infty$ such that $t'_k-t_k\to +\infty$ and
$$d(x_k, x'_k)< \frac{1}{k}, \ \ \ \ \text{where\ \  } x_k=\Phi^{t_k}(x), \ x'_k=\Phi^{t'_k}(x).$$
\end{lemma}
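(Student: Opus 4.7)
The plan is to exploit the compactness of $T^1M$: since the forward orbit of $x$ lives in a compact set, it must accumulate, and I can find a point through whose neighborhood the orbit passes infinitely often with arbitrarily large gaps in time. No use of aperiodicity, curvature, or the assumption $x \in \Lambda$ is actually needed; the statement follows from compactness alone.

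First I would let $\omega(x) \subset T^1M$ denote the $\omega$-limit set of the forward orbit of $x$; compactness of $T^1M$ forces $\omega(x) \neq \emptyset$, so I fix any $p \in \omega(x)$ together with a sequence $s_n \to +\infty$ for which $\Phi^{s_n}(x) \to p$.

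Next I would extract the two desired sequences by a diagonal argument. For each $k \in \mathbb{N}$, choose indices $n_k < n'_k$ such that
\[
d(\Phi^{s_{n_k}}(x), p) < \tfrac{1}{2k}, \qquad d(\Phi^{s_{n'_k}}(x), p) < \tfrac{1}{2k}, \qquad s_{n'_k} - s_{n_k} > k.
\]
Such a choice is possible because only finitely many $s_n$ violate the closeness condition, and among the remaining infinitely many (which still tend to $+\infty$) one can always find pairs with difference exceeding $k$. Setting $t_k := s_{n_k}$ and $t'_k := s_{n'_k}$, the triangle inequality yields $d(x_k, x'_k) < 1/k$, while $t_k, t'_k \to +\infty$ and $t'_k - t_k \to +\infty$ follow directly from the construction.

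The only mild obstacle is arranging all three requirements -- closeness of $x_k$ to $x'_k$, both times escaping to $+\infty$, and their gap growing -- in a single extraction, and this is handled uniformly by the triple condition above. The lemma should therefore be a short compactness argument; the real work of Theorem \ref{B} will come later, when one has to upgrade the arbitrarily close return pairs $(x_k, x'_k)$ on $\mathcal{O}(x)$ to genuinely distinct asymptotic orbits $y, z$, where expansivity (Lemma \ref{expansivity}) and the aperiodicity of $\mathcal{O}(x)$ will finally be used.
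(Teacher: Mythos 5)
Your argument is correct in substance and is genuinely simpler than the paper's, but it takes a different route. The paper proves the lemma by a quantitative pigeonhole argument: it considers a long orbit segment $[z_k,w_k]$ of length $T_k$, attaches a small transverse disk $D_\epsilon(y)$ of radius $\epsilon<\tfrac{1}{2k}$ to each point of the segment, and observes that if no two such disks met the resulting tube would have volume at least $C_0\epsilon^2 T_k$, which exceeds $\operatorname{Vol}(T^1M)$ once $T_k$ is large. Hence two disks along the segment overlap, giving the close pair $x_k, x'_k$. You instead pass directly to a point $p\in\omega(x)$ and extract two nearby returns with a large time gap; this is the more standard recurrence argument and avoids any geometry of transversals or volume estimates. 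Your observation that the statement needs only compactness of $T^1M$ --- not aperiodicity, nonpositive curvature, or $x\in\Lambda$ --- is also correct and worth making explicit; the paper's volume argument likewise does not use these hypotheses.

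One small gap: as written, your construction guarantees $t'_k-t_k>k$ (hence $t'_k\to+\infty$), but it does not by itself force $t_k\to+\infty$. Nothing prevents the choice $n_k$ from stabilizing at a fixed small index if $\Phi^{s_{n_1}}(x)$ happens to lie very close to $p$. This is trivial to repair: since the set of admissible indices (those with $d(\Phi^{s_n}(x),p)<\tfrac{1}{2k}$) is cofinite and $s_n\to+\infty$, you can additionally require $s_{n_k}>k$ when selecting $n_k$. With that extra clause the claim ``$t_k,t'_k\to+\infty$'' does indeed follow directly, and the proof is complete.
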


\begin{proof}
For any fixed $k\in \mathbb{N}$, let $\epsilon < \frac{1}{2k}$ sufficiently small be fixed. We choose a segment $[z_k, w_k]$ along the orbit $\mathcal{O}(x)$ from point $z_k$ to point $w_k$ with length $T_k$. Let $X$ be the vector field tangent to the geodesic flow on $T^1M$, and $X^{\bot}$ be the orthogonal complement of $X$, i.e. a two dimensional smooth distribution on $T^1M$. For any $y\in [z_k, w_k]$ define $D_\epsilon(y):=\exp_y(X^{\bot}_\epsilon(y))$, where $X^{\bot}_\epsilon(y)$ denote the $\epsilon$-ball centered at origin in the subspace $X^{\bot}(y)$.

 Assume $D_\epsilon(z) \cap D_\epsilon(w)= \emptyset$ for any $z, w \in [z_k, w_k]$. Since $T^1M$ is compact and its curvature is bounded, we have the following estimates on the volume:
$$C_0 \epsilon^2 T_k \leq \text{Vol}(\bigcup_{y\in [z_k, w_k]} D_\epsilon (y)) \leq \text{Vol} (T^1M).$$

But the above inequalities doesn't hold if we choose $T_k$ large enough. So there are two points in $[z_k, w_k]$, say, $x_k, x'_k$ such that $D_\epsilon(x_k) \cap D_\epsilon(x'_k)\neq \emptyset$, and hence $d(x_k, x'_k) < 2\epsilon < \frac{1}{k}$. Let $x_k=\Phi^{t_k}(x)$, $x'_k=\Phi^{t'_k}(x)$ where we can make $t'_k-t_k \to +\infty$ as $k\to +\infty$.

\end{proof}

For any pair of $x_k, x'_k$ with large enough $k$, we claim the expansivity in the positive direction of the flow:

\begin{proposition} \label {prop}
Fix an arbitrary small $\epsilon_0 >0$. There exists $s_k\to +\infty$, such that
$$d(\Phi^{s_k}(x_k), \Phi^{s_k}(x'_k))=\epsilon_0,$$
$$\text{\ and\ \ }d(\Phi^s(x_k), \Phi^s(x'_k))< \epsilon_0\ \ \ \text{\ for\ } \forall\ 0 \leq s< s_k.$$
\end{proposition}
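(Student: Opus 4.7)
Since $\mathcal{O}(x)$ is aperiodic and $x\in\Lambda$, Theorem \ref{flat strip closed} forces $x$ off every flat strip on $M$; hence by Lemma \ref{expansivity} each of $x_k, x'_k$ has the expansivity property with a common constant $\delta_0 > 0$. I would fix $\epsilon_0 \in (0,\delta_0)$ also smaller than the injectivity radius of $M$, so that any two orbits staying within $\epsilon_0$ of each other on a time interval admit consistent lifts to $\tilde M$. Set $f_k(s):=d(\Phi^s(x_k),\Phi^s(x'_k))$; this is continuous with $f_k(0)<1/k<\epsilon_0$. By the intermediate value theorem, it suffices to establish (a) that $f_k$ attains the value $\epsilon_0$ at some positive time $s_k$ (which then becomes the first-hitting time), and (b) that $s_k\to\infty$ as $k\to\infty$.

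For (a), the main step, I argue by contradiction: suppose $f_k(s)<\epsilon_0$ for every $s\geq 0$ for some large $k$. Using $\epsilon_0<\mathrm{inj}(M)$, I lift consistently to obtain geodesics $\tilde\alpha, \tilde\beta$ on $\tilde M$ (the lifts of $\gamma_{x_k},\gamma_{x'_k}$ through nearby initial points) with $d(\tilde\alpha(s),\tilde\beta(s))<\epsilon_0$ for all $s\geq 0$. These are distinct geodesics in $\tilde M$: if they coincided, projecting would give $x'_k=\Phi^r(x_k)$ for a small $r$, and combined with $\tau_k:=t'_k-t_k\to\infty$ this would force $\mathcal{O}(x)$ periodic, contradicting the standing hypothesis. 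Both $\tilde\alpha,\tilde\beta$ are flat geodesics (as lifts of orbits in $\Lambda$), share a common ideal endpoint $\omega=\tilde\alpha(+\infty)=\tilde\beta(+\infty)$, and are forward asymptotic. Applying the remark following Theorem \ref{ideal triangle} to the ideal triangle $\triangle=\triangle(\omega,\tilde\alpha(0),\tilde\beta(0))$ gives infinite area, while the Gauss--Bonnet formula yields $\int_\triangle K\,dA = \alpha_1+\alpha_2-\pi\in[-\pi,0]$, a finite number. Since $K\leq 0$ throughout $\triangle$ and $\int_\triangle |K|\,dA\leq\pi$, combined with the convexity of $s\mapsto d(\tilde\alpha(s),\tilde\beta(s))$ on $\tilde M$ and the flatness of both boundary geodesics, this would force $\tilde\alpha,\tilde\beta$ to bound a flat strip on $\tilde M$ (via a flat half-strip on $[0,\infty)$ that is extended to all of $\mathbb R$ using Lemma \ref{flat strip lemma}). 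Projecting back to $M$ then places $\gamma_{x_k}$ on a flat strip, contradicting the opening paragraph.

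For (b), continuous dependence of the geodesic flow on initial conditions yields $\sup_{0\leq s\leq T} f_k(s)\to 0$ as $k\to\infty$ for every fixed $T>0$; thus $s_k>T$ for all sufficiently large $k$, giving $s_k\to\infty$. The main obstacle lies in step (a): turning the ``infinite-area bounded-width flat-sided ideal triangle'' configuration into a genuine flat strip bounded by $\tilde\alpha$ and $\tilde\beta$. This requires a careful interplay between Gauss--Bonnet (which pins down where $K$ can be strictly negative), convexity of the distance between two geodesics in nonpositive curvature, and the Flat Strip Lemma \ref{flat strip lemma} applied after extending the resulting flat half-strip from $[0,\infty)$ to all of $\mathbb R$; this extension step, using the flatness of both boundary geodesics, is where the nontrivial geometric work is concentrated.
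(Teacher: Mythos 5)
The paper's proof proceeds very differently from yours, and your approach has a genuine gap. After assuming $d(\Phi^s(x_k),\Phi^s(x'_k))\leq\epsilon_0$ for all $s\geq 0$, the paper splits on the forward asymptotic behavior of the convex function $t\mapsto d(\gamma_{x_k}(t),\gamma_{x'_k}(t))$: either the limit is $0$, or it is some $\delta>0$. In the first case, $x_k$ and $x'_k$ converge to a common $\omega$-limit point $z$, and the shift structure $x'_k=\Phi^{t'_k-t_k}(x_k)$ with $t'_k-t_k\to\infty$ forces $z$ to be periodic; then Lemma~\ref{flat closed geodesic} gives that $x_k$, hence $x$, is periodic, contradiction. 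In the second case, Lemma~\ref{boundary} shows $\omega(x_k)$ is a flat strip, hence consists of closed geodesics (Theorem~\ref{flat strip closed}), and again Lemma~\ref{flat closed geodesic0} forces $x_k$ periodic. The crucial tool the paper uses---and you do not---is the ``shadowing'' family of lemmas (\ref{boundary}, \ref{flat closed geodesic0}, \ref{flat closed geodesic}) turning accumulation of a flat geodesic on a closed geodesic into periodicity of the flat geodesic itself.

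Your proposal instead tries to force a flat half-strip between $\tilde\alpha$ and $\tilde\beta$ from the combination of (i) infinite area of the semi-ideal triangle, (ii) $\int_\triangle|K|\,dA\leq\pi$ from Gauss--Bonnet, (iii) bounded convex width, and (iv) flatness of both boundary geodesics. This inference does not hold. In particular, if $d(\tilde\alpha(s),\tilde\beta(s))\to 0$, there is no flat half-strip of any positive width, yet (i)--(iv) all remain true, so your argument produces no contradiction in exactly the case the paper handles via Lemma~\ref{flat closed geodesic}. Even in the case $d\to\delta>0$, where a flat half-strip does occur in the limit, the extension to a bi-infinite flat strip that you propose is not automatic (convexity of the width permits strict growth for $s<0$, so Lemma~\ref{flat strip lemma} cannot be applied directly to $\tilde\alpha,\tilde\beta$ on all of $\mathbb R$); you acknowledge this is ``where the nontrivial geometric work is concentrated'' but do not carry it out, and in fact the correct route in this subcase is to pass to the $\omega$-limit and apply Lemma~\ref{boundary} rather than to build a strip between $\tilde\alpha$ and $\tilde\beta$ themselves. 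So both subcases of your argument have holes; the case distinction by asymptotic limit and the use of the flat-geodesic-accumulating-on-periodic lemmas are the essential missing ideas.

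Your checks that $\tilde\alpha\neq\tilde\beta$ and that $s_k\to\infty$ are correct, and the observation that $x$ is not tangent to a flat strip (via Theorem~\ref{flat strip closed} and aperiodicity) is a valid starting point, matching the paper's Remark after the proposition.
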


\begin{remark}
In fact for the purpose of our construction, it is enough to have the expansivity in either positive or negative direction, and this is easily known since $x$ is not closed and hence not tangent to a flat strip by Theorem \ref{flat strip closed}. But in Proposition \ref{prop}, we have a stronger statement that the flow is expansive in the positive direction. To prove it, we will make use of several lemmas which seem to be of independent interest.
\end{remark}

The following lemma was proved in (\cite{BG}) and stated in (\cite{CX}):
\begin{lemma} \label{boundary}
If $w' \in W^s(w)$ and $\lim _{t\to +\infty} d(\gamma_w(t), \gamma_{w'}(t))=\delta >0$, then $\gamma_w(t)$ and $\gamma_{w'}(t)$ converge to the boundaries of a flat strip of width $\delta$.
\end{lemma}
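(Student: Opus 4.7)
The plan is to work in the universal cover and extract a limiting pair of geodesics at constant distance, then invoke the Flat Strip Lemma. After lifting, write $\tilde\gamma_w$ and $\tilde\gamma_{w'}$ for the corresponding geodesics in $\tilde M$; since $w'\in W^s(w)$, we may arrange that $\tilde\gamma_w$ and $\tilde\gamma_{w'}$ are asymptotic, and the hypothesis reads $\lim_{t\to+\infty}d(\tilde\gamma_w(t),\tilde\gamma_{w'}(t))=\delta$.

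First I would use the standard convexity of distance in nonpositive curvature: the function $f(t):=d(\tilde\gamma_w(t),\tilde\gamma_{w'}(t))$ is convex on $\mathbb{R}$. Because the two geodesics are asymptotic, $f$ is bounded on $[0,\infty)$, and a convex function bounded above on a half-line is nonincreasing there. Hence $f$ decreases monotonically to $\delta>0$, with $f(t)\ge\delta$ for every $t$.

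Next comes the main step, a compactness argument. Choose $t_n\to+\infty$ and deck transformations $\phi_n\in\pi_1(M)$ so that $\phi_n(\tilde\gamma_w(t_n))$ lies in a fixed compact fundamental domain $F\subset\tilde M$. Set $v_n:=d\phi_n(\dot{\tilde\gamma}_w(t_n))$ and $v_n':=d\phi_n(\dot{\tilde\gamma}_{w'}(t_n))$. The footpoint of $v_n$ lies in $F$, and the footpoint of $v_n'$ is within distance $f(t_n)\le f(0)$ of it, so both sequences stay in a compact subset of $T^1\tilde M$. Passing to a subsequence, $v_n\to v$ and $v_n'\to v'$. For any fixed $s\in\mathbb{R}$, continuous dependence of geodesics on initial data together with the convergence $f(t_n+s)\to\delta$ yield
\[
d(\tilde\gamma_v(s),\tilde\gamma_{v'}(s))=\lim_n d(\tilde\gamma_w(t_n+s),\tilde\gamma_{w'}(t_n+s))=\delta,
\]
so $\tilde\gamma_v$ and $\tilde\gamma_{v'}$ are distinct geodesics at constant distance $\delta$. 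By Lemma \ref{flat strip lemma} they bound a flat strip, which has width exactly $\delta$.

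Finally, the conclusion ``$\gamma_w(t)$ and $\gamma_{w'}(t)$ converge to the boundaries of a flat strip of width $\delta$'' is to be read in $T^1M$: every accumulation point of $\{\Phi^{t_n}(w)\}$ (respectively $\{\Phi^{t_n}(w')\}$) with $t_n\to+\infty$ is tangent to the boundary of a flat strip of width $\delta$, which is exactly what one obtains by projecting $v$ and $v'$ back down. I expect the main obstacle to be not a hard estimate but the careful interpretation of this convergence: the limiting boundary geodesics a priori depend on the subsequence and on the choice of $\phi_n$, so one should not attempt to produce one fixed pair of boundary geodesics in $\tilde M$ to which $\tilde\gamma_w$ and $\tilde\gamma_{w'}$ are themselves asymptotic.
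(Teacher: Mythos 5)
Your proposal is correct and follows essentially the same route as the paper: both arguments pass to a subsequence $t_n\to+\infty$ with $\Phi^{t_n}(w)\to v$, $\Phi^{t_n}(w')\to v'$ (you phrase this via deck transformations pulling $\tilde\gamma_w(t_n)$ into a fundamental domain, the paper does it directly by compactness of $T^1M$), use convexity of the distance function to conclude the lifted limit geodesics stay at constant distance $\delta$, and then apply the Flat Strip Lemma. Your closing remark about the limiting boundary geodesics possibly depending on the subsequence is a sensible caveat but does not alter the substance of the argument.
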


\begin{proof}
Suppose $\lim_{s_i\to +\infty}\Phi^{s_i}(w) = v$ and $\lim_{s_i\to +\infty}\Phi^{s_i}(w')= v'$, then $v'\in W^s(v)$ and for any $t\in \mathbb{R}$:
$$d(\gamma_{v}(t), \gamma_{v'}(t))= \lim_{s_i\to +\infty}d(\gamma_{w}(t+s_i), \gamma_{w'}(t+s_i)) = \delta.$$

Hence we can lift the geodesics to $\tilde{M}$ such that $v'\in \tilde{W}^s(v)$ and $d(\tilde{\gamma}_v(t), \tilde{\gamma}_{v'}(t)) \\ = \delta$ for $\forall t \in \mathbb{R}$ (here we used the convexity of the function $d(\tilde{\gamma}_v(t), \tilde{\gamma}_{v'}(t))$). By Lemma \ref {flat strip lemma}, $\tilde{\gamma}_v(t)$ and $\tilde{\gamma}_{v'}(t)$ are the boundaries of a flat strip of width $\delta$.
\end{proof}

 The next lemma says that a flat geodesic converges to another closed geodesic(no matter flat or not), then the former must be closed as well and hence coincide with the latter.

\begin{lemma}\label{flat closed geodesic0}
If $y\in \Lambda$, and the $\omega$-limit set $\omega(y)= \mathcal{O}(z)$ where $\mathcal{O}(z)$ is periodic. Then $\mathcal{O}(y)=\mathcal{O}(z)$. In particular, $\mathcal{O}(y)$ is periodic.
\end{lemma}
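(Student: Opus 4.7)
First, I would observe that $\omega(y)\subset\Lambda$: if $w=\lim_n\Phi^{t_n}(y)$, continuity of $K$ and $y\in\Lambda$ force $K(\gamma_w(t))=\lim_n K(\gamma_y(t+t_n))=0$ for all $t$. Hence $z\in\Lambda$ and $\gamma_z$ is a closed flat geodesic of period $T$; let $\phi\in\pi_1(M)$ be the deck transformation fixing a lift $\tilde{\gamma}_z$ by translation by $T$.

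The crux is to align lifts using periodicity. Picking $t_n\to+\infty$ with $\Phi^{t_n}(y)\to z$ and $t_{n+1}-t_n=T$, and choosing deck transformations $\phi_n$ with $\phi_n^{-1}\tilde{\Phi}^{t_n}(\tilde{y})\to\tilde{z}$, I would apply $\tilde{\Phi}^T$ and use $\tilde{\Phi}^T(\tilde{z})=\phi\tilde{z}$ together with the discreteness of the $\pi_1(M)$-action to force $\phi_{n+1}=\phi_n\phi$ for $n$ large. Then $\phi_n\tilde{\gamma}_z=\phi_0\tilde{\gamma}_z$ is the same lift of $\gamma_z$ for all large $n$, and after relabeling, $\tilde{\gamma}_y(+\infty)=\tilde{\gamma}_z(+\infty)=:\xi$. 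Consequently, $y$ lies on the same weak stable leaf as some $y'\in\mathcal{O}(z)$.

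Assuming for contradiction that $y\notin\mathcal{O}(z)$, I would split cases. If $\gamma_y$ is tangent to a flat strip, Theorem \ref{flat strip closed} makes $\gamma_y$ closed, so the hypothesis $\omega(y)=\mathcal{O}(z)$ gives $\mathcal{O}(y)=\mathcal{O}(z)$, a contradiction. Hence $\gamma_y$ is not in a flat strip and $y$ is expansive by Lemma \ref{expansivity}. Let $\delta=\lim_{t\to\infty}d(\gamma_y(t),\gamma_{y'}(t))\geq 0$. If $\delta>0$, Lemma \ref{boundary} produces closed orbits $\mathcal{O}(a)\subset\omega(y)=\mathcal{O}(z)$ and $\mathcal{O}(b)\subset\omega(y')=\mathcal{O}(z)$ bounding a flat strip; since closed orbits contained in each other coincide, $\gamma_a$ and $\gamma_b$ both project to $\gamma_z$, placing $\gamma_z$ inside a flat cylinder in $M$. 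Tracking $\tilde{\gamma}_y$'s asymptote via Lemma \ref{flat strip lemma} then forces $\gamma_y$ into the same flat cylinder, contradicting the case assumption. If $\delta=0$ and $\tilde{\gamma}_y(-\infty)=\tilde{\gamma}_{y'}(-\infty)$, then $d(\tilde{\gamma}_y(t),\tilde{\gamma}_{y'}(t))$ is bounded on $\mathbb{R}$, and Lemma \ref{flat strip lemma} gives a flat strip, again a contradiction.

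The main obstacle is the remaining subcase $\delta=0$ with $\tilde{\gamma}_y(-\infty)\neq\tilde{\gamma}_{y'}(-\infty)$. Here I would form the ideal triangle with vertices $\xi,\tilde{\gamma}_y(-\infty),\tilde{\gamma}_{y'}(-\infty)$; by Theorem \ref{ideal triangle} applied to the flat edge $\tilde{\gamma}_y$, this triangle has infinite area. To reach a contradiction, I would exploit the periodicity of $\gamma_z$ and the convergence of $\tilde{\gamma}_y$ toward the axis of $\phi$ (so that $d(\phi(\tilde{\gamma}_y(t)),\tilde{\gamma}_y(t))\to\tau$, the translation length of $\phi$) in order to project the relevant region of the triangle onto a region of finite area in $M$ with controlled multiplicity, giving a finite upper bound on the triangle's area and hence the required contradiction. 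Making this area comparison precise is the technical heart of the argument.
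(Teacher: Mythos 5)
Your opening observations — that $\omega(y)\subset\Lambda$, and that one can align lifts via the deck transformation $\phi$ of $\tilde{\gamma}_z$ to obtain $\tilde{\gamma}_y(+\infty)=\tilde{\gamma}_z(+\infty)$ — match the first paragraph of the paper's proof. Note that once the lifts are aligned, the convergence $\tilde{\Phi}^{t_n}(\tilde{y})\to\tilde{z}$ together with convexity of $t\mapsto d(\tilde{\gamma}_y(t),\tilde{\gamma}_z(t))$ already forces $\delta=0$; so your $\delta>0$ case is vacuous (not wrong, just unnecessary), and the flat-strip case is similarly disposed of quickly. The substance of the lemma is concentrated exactly where you flag it: $\delta=0$, $\tilde{\gamma}_y(+\infty)=\tilde{\gamma}_z(+\infty)$, $\tilde{\gamma}_y(-\infty)\neq\tilde{\gamma}_z(-\infty)$, equivalently $\phi(\tilde{\gamma}_y)\neq\tilde{\gamma}_y$.

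Here your proposed mechanism has a genuine gap. You suggest bounding the infinite ideal triangle's area by projecting it into $M$ ``with controlled multiplicity.'' But near $\xi$ the triangle is the thin wedge between $\tilde{\gamma}_y$ and $\tilde{\gamma}_z$, and under $\pi:\tilde{M}\to M$ this wedge wraps around the closed geodesic $\gamma_z$ infinitely many times: a point at distance $r$ from $\gamma_z$ in $M$ is hit roughly $\#\{n: d(\tilde{\gamma}_y(nT),\tilde{\gamma}_z)>r\}$ times, which is unbounded as $r\to 0$. So the multiplicity is not controlled, and no finite-area contradiction follows from naive projection. The paper never projects. Instead it uses that $\phi$ is an \emph{area-preserving isometry} fixing $\tilde{\gamma}_z$ and $\xi$: choosing a transversal $\tilde{\alpha}$, the region $ABF$ bounded by $\tilde{\gamma}_z$, $\tilde{\gamma}_y$, $\tilde{\alpha}$ is carried by $\phi$ to the region $CEF$ bounded by $\tilde{\gamma}_z$, $\phi(\tilde{\gamma}_y)$, $\phi(\tilde{\alpha})$, and a truncation-and-limit comparison of the two decompositions yields that the \emph{compact} quadrilateral $ABCD$ (between $\tilde{\alpha}$, $\phi(\tilde{\alpha})$, $\tilde{\gamma}_z$, $\tilde{\gamma}_y$) has area at least that of the infinite triangle $DEF$ with flat edge $\tilde{\gamma}_y$. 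Since Theorem~\ref{ideal triangle} makes the latter infinite while the former is compact, this is a contradiction, forcing $\phi(\tilde{\gamma}_y)=\tilde{\gamma}_y$; then $\tilde{\gamma}_y(\pm\infty)=\tilde{\gamma}_z(\pm\infty)$, and the Flat Strip Lemma with $\delta=0$ gives $\tilde{\gamma}_y=\tilde{\gamma}_z$. To repair your sketch, you would either reproduce this $\phi$-equivariant area comparison in $\tilde{M}$, or — closest in spirit to your idea — replace ``the triangle'' by the region between $\tilde{\gamma}_y$ and $\phi(\tilde{\gamma}_y)$, which is essentially a fundamental domain for $\phi$ and \emph{does} project with bounded multiplicity; but verifying that this region has infinite area again requires the same comparison via $\phi$, so it is the same argument in different clothes.
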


\begin{proof}

\begin{figure}[b]
\centering
\setlength{\unitlength}{\textwidth}

\begin{tikzpicture}
\draw (0,0) circle (2);

\draw [name path=line 1](0,2) to [out=260,in=60] (-1,-1.73)node[below] {$\tilde{\gamma}_0$};
\draw  [name path=line 2](0,2)to [out=270,in=130] (1, -1.73)node[below] {$\tilde{\gamma}$};
\draw  [name path=line 3](0,2)to [out=280,in=135] (1.42, -1.42)node[right] {$\phi(\tilde{\gamma})$};

\draw [name path=line 4](-1.73, -1) to [out=355,in=182] (1.73,-1)node[right] {$\tilde{\alpha}$};
\draw ([name path=line 5]-2,0) to [out=355,in=182] (2,0) node[right] {$\phi(\tilde{\alpha})$};
\coordinate [label=below:$A$] (A) at (-0.67,-1.05);
\coordinate [label=below:$B$] (B) at (0.51,-1.05);
\coordinate [label=below left:$C$] (C) at (-0.35,-0.05);
\coordinate [label=below left:$D$] (D) at (0.15,-0.05);
\coordinate [label=below right:$E$] (E) at (0.5,-0.03);
\coordinate [label=above:$F$] (F) at (0,2);
\fill (A) circle (1.5pt)(B) circle (1.5pt)(C) circle (1.5pt)(D) circle (1.5pt)(E) circle (1.5pt)(F) circle (1.5pt);
\end{tikzpicture}

\caption[]{Proof of Lemma \ref{flat closed geodesic0}}

\end{figure}

First we prove that we can lift geodesics $\gamma_z(t), \gamma_y(t)$ to the universal covering $\tilde{M}$, denoted as $\tilde{\gamma}_0(t)$ and $\tilde{\gamma}(t)$ respectively, such that $\tilde{\gamma}_0(+\infty)=\tilde{\gamma}(+\infty)$. Indeed, the assumption $\omega(y)= \mathcal{O}(z)$ guarantees that we can lift $\gamma_z(t), \gamma_y(t)$ to $\tilde{\gamma}_0(t)$ and $\tilde{\gamma}(t)$ such that $d(\tilde{\gamma}_0(kt_0), \tilde{\gamma}(t_k)) \to 0$ where $t_0$ is a period of $z$. Then by the convexity of $d(\tilde{\gamma_0}(t), \tilde{\gamma}(t))$ and a shifting of time on $\tilde{\gamma}(t)$ if necessary, we have $\lim_{t\to +\infty}d(\tilde{\gamma}_0(t), \tilde{\gamma}(t))=0$, hence $\tilde{\gamma}_0(+\infty)=\tilde{\gamma}(+\infty)$.

Since $\gamma_z(t)$ is a closed geodesic, there exist an isometry $\phi$ of $\tilde{M}$ such that $\phi(\tilde{\gamma}_0(t))=\tilde{\gamma}_0(t+t_0)$. Moreover, on the boundary of the disk $\tilde{M}(\infty)$, $\phi$ fixes exactly two points $\tilde{\gamma}_0(\pm \infty)$, and for any other point $a\in \tilde{M}(\infty)$, $\lim_{n\to +\infty}\phi^n(a)= \tilde{\gamma}_0(+\infty)$.

Assume $\tilde{\gamma}$ is not fixed by $\phi$. Then $\tilde{\gamma}$ and $\phi(\tilde{\gamma})$ don't intersect since $\phi(\tilde{\gamma})(+\infty)=\tilde{\gamma}(+\infty)$. We pick another geodesic $\tilde{\alpha}$ as shown in Figure 1. The image of infinite triangle $ABF$ under $\phi$ is the infinite triangle $CEF$. Since $\phi$ is an isometry, it preserves area. With a limit process, it is easy to show that Area of $ABCD$ $\geq$ Area of $DEF$. But since $\gamma$ is a flat geodesic, Area of $DEF$ is infinite by Theorem \ref{ideal triangle}, which is a contradiction since $ABCD$ has finite area. So $\phi(\tilde{\gamma})$ and $\tilde{\gamma}$ must coincide.

Hence $\tilde{\gamma}(\pm\infty)= \tilde{\gamma}_0(\pm\infty)$. Then either $\tilde{\gamma}(t)$ and $\tilde{\gamma}_0(t)$ bound a flat strip by Lemma \ref{flat strip lemma} or $\tilde{\gamma}(t)=\tilde{\gamma}_0(t)$. But $\lim_{t\to +\infty}d(\tilde{\gamma}(t), \tilde{\gamma}_0(t))=0$, hence $\tilde{\gamma}(t)=\tilde{\gamma}_0(t)$. Hence $\mathcal{O}(y)=\mathcal{O}(z)$.
\end{proof}

We improve Lemma \ref{flat closed geodesic0} as follows.
\begin{lemma}\label{flat closed geodesic}
 Suppose that $y\in \Lambda$ and $z\in \omega(y)$ where $z$ is periodic. Then $\mathcal{O}(y)=\mathcal{O}(z)$. In particular, $y$ is periodic.
\end{lemma}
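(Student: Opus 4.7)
The plan is to reduce to the setup of Lemma \ref{flat closed geodesic0} by producing a suitable lift $\tilde{\alpha}$ of $\gamma_z$ asymptotic to the lift $\tilde{\gamma}$ of $\gamma_y$, and then rerunning the ideal-triangle argument. Pick $t_k\to+\infty$ with $\Phi^{t_k}(y)\to z$, fix lifts $\tilde y$ of $y$ and $\tilde z=\tilde{\gamma}_0'(0)$ of $z$, and let $\phi\in\pi_1(M)$ be the axial isometry with $\phi(\tilde{\gamma}_0(t))=\tilde{\gamma}_0(t+t_0)$. For each large $k$, choose $\psi_k\in\pi_1(M)$ so that $d_{T^1\tilde M}(\tilde{\Phi}^{t_k}(\tilde y),\psi_k(\tilde z))\to 0$ and set $\tilde{\alpha}_k:=\psi_k(\tilde{\gamma}_0)$, a lift of $\gamma_z$.

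The main technical step, and the point where I expect the real work to lie, is showing that after passing to a subsequence the $\tilde{\alpha}_k$ stabilize as a single lift $\tilde{\alpha}$. Applying $\tilde{\Phi}^{t_{k+1}-t_k}$ to the approximation at time $t_k$ and comparing with the approximation at time $t_{k+1}$ yields $\psi_k^{-1}\psi_{k+1}(\tilde z)\approx\tilde{\gamma}_0'(t_{k+1}-t_k)$. Projecting to $T^1M$, the left-hand side sits at $z$ while the right sits at $\Phi^{t_{k+1}-t_k}(z)$, so $t_{k+1}-t_k$ must lie near some $n_k t_0$; then discreteness of the $\pi_1(M)$-action on $T^1\tilde M$ forces $\psi_k^{-1}\psi_{k+1}=\phi^{n_k}$ for $k$ large, and since $\phi$ preserves $\tilde{\gamma}_0$, we obtain $\tilde{\alpha}_{k+1}=\tilde{\alpha}_k$. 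A subtlety is that the precision in applying $\tilde{\Phi}^{t_{k+1}-t_k}$ can degrade when $t_{k+1}-t_k$ is large, which may require an initial refinement of the subsequence so that the discreteness argument is applicable.

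Once $\tilde{\alpha}_k=\tilde{\alpha}$ for all large $k$, convexity of $s\mapsto d(\tilde{\gamma}(s),\tilde{\alpha}(s+\sigma))$ on the Hadamard manifold $\tilde M$, together with smallness at the endpoints of each interval $[t_k,t_{k+1}]$, propagates to smallness throughout the interval; chaining gives $d(\tilde{\gamma}(s),\tilde{\alpha}(s+\sigma))\to 0$ as $s\to+\infty$ for a common shift $\sigma$, so in particular $\tilde{\gamma}(+\infty)=\tilde{\alpha}(+\infty)$. I then replay the ideal-triangle argument of Lemma \ref{flat closed geodesic0} with $\tilde{\alpha}$ in place of $\tilde{\gamma}_0$: the axial isometry $\phi_{\tilde{\alpha}}=\psi_k\phi\psi_k^{-1}$ of $\tilde{\alpha}$ must fix $\tilde{\gamma}$, otherwise Theorem \ref{ideal triangle} together with area-invariance of $\phi_{\tilde{\alpha}}$ forces the same contradiction as in Figure 1. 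Hence $\tilde{\gamma}(\pm\infty)=\tilde{\alpha}(\pm\infty)$; by Lemma \ref{flat strip lemma}, either $\tilde{\gamma}=\tilde{\alpha}$ up to time shift, or they bound a flat strip. The flat-strip alternative is ruled out because the distance goes to $0$ rather than to a positive constant (a flat strip would force constant positive distance). Therefore $\tilde{\gamma}=\tilde{\alpha}$, so $y\in\mathcal{O}(z)$ and $\mathcal{O}(y)=\mathcal{O}(z)$.
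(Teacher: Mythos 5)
Your proof contains a genuine gap, and it is precisely at the step you flag as a "subtlety": the stabilization of the lifts $\tilde{\alpha}_k$.

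To derive $\psi_k^{-1}\psi_{k+1}(\tilde z)\approx\tilde{\gamma}_0'(t_{k+1}-t_k)$ you apply $\tilde\Phi^{t_{k+1}-t_k}$ to the approximation $\tilde\Phi^{t_k}(\tilde y)\approx\psi_k(\tilde z)$ and combine it with $\tilde\Phi^{t_{k+1}}(\tilde y)\approx\psi_{k+1}(\tilde z)$. But the geodesic flow on $T^1\tilde M$ does not preserve closeness over long times: a small error at time $t_k$ can grow without bound under $\tilde\Phi^{t_{k+1}-t_k}$ unless $\tilde\Phi^{t_k}(\tilde y)$ lies on the stable horosphere of $\psi_k(\tilde z)$. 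Your proposed remedy --- "an initial refinement of the subsequence" --- does not help, because thinning a sequence $t_k\to\infty$ only increases the gaps $t_{k+1}-t_k$. If the gaps were bounded along the full sequence, the orbit of $y$ would stay in a neighborhood of $\mathcal{O}(z)$ for all large time, forcing $\omega(y)=\mathcal{O}(z)$ outright; that is exactly the easy case (handled by Lemma \ref{flat closed geodesic0}). So the stabilization claim is not a technical detail to be patched later: it is equivalent to asserting $\tilde\gamma_y$ is forward-asymptotic to a single lift of $\gamma_z$, i.e.\ that $\Phi^{s_k}(y)$ eventually lies in $W^s(z)$, which is precisely the dichotomy the paper's proof is built around and is untrue a priori.

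The hard case --- $\Phi^{s_k}(y)\notin W^s(z)$ for all $k$ --- is where the real work lives, and the paper handles it by a different mechanism. Using expansivity (Lemma \ref{expansivity}), one flows each near-return until it departs to a fixed distance $\epsilon_0$, passes to limits $y^+,z^+$ (and $y^-,z^-$ in backward time), and then invokes Lemma \ref{boundary}, the Flat Strip Lemma \ref{flat strip lemma}, and the Cao--Xavier closing theorem \ref{flat strip closed} to produce ever-widening flat strips around $\gamma_z$, contradicting compactness. None of these tools appear in your argument, and the discreteness of the $\pi_1$-action cannot substitute for them. Your endgame (convexity propagation, the ideal-triangle argument, ruling out a nontrivial flat strip because the distance tends to $0$) is fine once one already knows the forward asymptotics, but that is essentially Lemma \ref{flat closed geodesic0}, and the point of the present lemma is to remove the hypothesis $\omega(y)=\mathcal{O}(z)$.
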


\begin{proof}
Suppose that there exist $s_k \to +\infty$ such that $\Phi^{s_k}(y) \to z$. If $\Phi^{s_k}(y)\in W^s(z)$ for some $k$ then we must have $\omega(y)= \mathcal{O}(z)$. Then by Lemma \ref{flat closed geodesic0}, we have that $\mathcal{O}(\Phi^{s_k}(y))=\mathcal{O}(z)$. So we are done.

Suppose that $\Phi^{s_k}(y)\notin W^s(z)$ for any $k$. Note that if $y\neq z$ then $y$ and $z$ can not be tangent to a same flat strip. Therefore, for any large $k$ there exists small $\epsilon_0>0$ and a $l_k \to +\infty$ such that
$$d(\Phi^{l_k}(\Phi^{s_k}(y)), \Phi^{l_k}(z))=\epsilon_0,$$
where we take $l_k$ to be the smallest positive number to satisfy the above equality. By taking a subsequence but still using the same notation for simplicity, we assume that
\begin{equation}\label{e:sequence}
\Phi^{l_k}(\Phi^{s_k}(y)) \to y^+, \ \ \ \text{and\ \ \ \ }\Phi^{l_k}(z) \to z^+
\end{equation}
as $k\to +\infty$. Then $z^+$ is periodic and $d(y^+,z^+)=\epsilon_0$. For any $t>0$, since $0 < -t+l_k < l_k$ for large enough $k$, one has
$$d(\Phi^{-t}(y^+), \Phi^{-t}(z^+))=\lim_{k \to +\infty}d(\Phi^{-t+l_k+s_k}(y)),\Phi^{-t+l_k}(z) ) \leq \epsilon_0.$$
So $-y^+ \in W^s(-z^+)$. Replacing $y, z$ by $-y, -z$ respectively and applying the same argument, we can obtain two points $y^-,z^-$ such that $-y^- \in W^s(-z^-)$ and $d(y^-,z^-)=\epsilon_0$, $y^-\in \omega(-y)$ and $z^-$ is periodic. Then we have the following three different cases:
 \begin{enumerate}
 \item $\lim_{t\to \infty}d(\Phi^{t}(-y^+), \Phi^t(-z^+))= 0$. By Lemma \ref{flat closed geodesic0}, $-y^+$ is periodic and in fact $-y^+=-z^+$ as $\lim_{t\to \infty}d(\Phi^{t}(-y^+), \Phi^t(-z^+))= 0$. This contradicts to $d(y^+,z^+)=\epsilon_0$.
 \item $\lim_{t\to \infty}d(\Phi^{t}(-y^-), \Phi^t(-z^-))= 0$. By Lemma \ref{flat closed geodesic0}, $-y^-$ is periodic and in fact $-y^-=-z^-$ as $\lim_{t\to \infty}d(\Phi^{t}(-y^-), \Phi^t(-z^-))= 0$. This contradicts to $d(y^-,z^-)=\epsilon_0$.
 \item $\lim_{t\to \infty}d(\Phi^{t}(-y^+), \Phi^t(-z^+))= \delta_1$ and $\lim_{t\to \infty}d(\Phi^{t}(y^-), \Phi^t(z^-))= \delta_2$ for some $\delta_1, \delta_2>0$. By Lemma \ref{boundary} $-y^+$ converges to a closed flat geodesic. Then by Lemma \ref{flat closed geodesic0} $\gamma_{y^+}$ and $\gamma_z$ are boundaries of a flat strip of width $\delta_1$. By the same argument $\gamma_{y^-}$ and $\gamma_z$ are boundaries of a flat strip of width $\delta_2$. We claim that these two flat strips lie on the different sides of $\gamma_z$. Indeed, we choose $\epsilon_0$ small enough and consider the $\epsilon_0$ neighborhood of the closed geodesic $\gamma_z$ which contains two regions lying on the different sides of $\gamma_z$. We can choose the sequences in \eqref{e:sequence} for $y$ and $-y$ respectively such that $y^+$ and $y^-$ lie in different regions as above. This implies the claim. So we get a flat strip of width $\delta_1+\delta_2$ and $z$ is tangent to the interior of the flat strip. Now recall that $y^+ \in \omega(y)$ and $y^+$ is periodic, so we can apply all the arguments above to $y^+$ instead of $z$. Either we are arriving at a contradiction as in case (1) or case (2) and we are done, or we get a flat strip of width greater than $\delta_1+\delta_2$. But we can not enlarge a flat strip again and again in a compact surface. So we are done.
\end{enumerate}
\end{proof}
\begin{proof}[Proof of Theorem \ref{C}]
Assume that there exists a sequence $y_k \in \Lambda \cap (\text{Per\ } (\Phi))^c$ such that $\lim_{k \to +\infty}y_k=z$ for some $z\in \Lambda \cap \text{Per\ } (\Phi)$. We can apply the same argument in the proof of Lemma \ref{flat closed geodesic} replacing $\Phi^{s_k}(y)$ by $y_k$ to get a contradiction.
\end{proof}

\begin{proof}[Proof of Proposition \ref{prop}]
Assume the contrary, i.e. $d(\Phi^s(x_k), \Phi^s(x'_k))\leq \epsilon_0$ for $\forall s>0$. Then two geodesics $\gamma_{x_k}$ and $\gamma_{x'_k}$ are asymptotic. Without loss of generality, we suppose $x'_k \in W^s(x_k)$. By the convexity of $d(\gamma_{x_k}(t), \gamma_{x'_k}(t))$, we have either $\lim_{t\to +\infty}d(\gamma_{x_k}(t), \gamma_{x'_k}(t))=0$ or $\lim_{t\to +\infty}d(\gamma_{x_k}(t), \gamma_{x'_k}(t))=\delta>0$.
\begin{itemize}
           \item If $\lim_{t\to +\infty}d(\gamma_{x_k}(t), \gamma_{x'_k}(t))=0$, then we can choose a subsequence $s_i \to +\infty$, and $z$ such that $$\lim_{s_i\to +\infty} \Phi^{s_i}(x_k)=z$$
                and $$\lim_{s_i\to +\infty} \Phi^{s_i}(x'_k)=z.$$
                Since $x_k=\Phi^{t_k}(x)$ and $x'_k=\Phi^{t'_k}(x)$ with $t'_k-t_k\to +\infty$ as $k\to \infty$,
                we have $\lim_{s_i\to +\infty}\Phi^{s_i}(x'_k)=\lim_{s_i\to +\infty}\Phi^{t'_k-t_k}\circ\Phi^{s_i}(x_k)=\Phi^{t'_k-t_k}(z)$. Hence $\Phi^{t'_k-t_k}(z)=z$, so $z$ is a periodic point in $\Lambda$. As $z \in\omega(x_k)$, by Lemma \ref{flat closed geodesic}, $x_k$ is periodic, hence so is $x$. But we assume $x$ is aperiodic at the beginning. A contradiction.

     \item If $\lim_{t\to +\infty}d(\gamma_{x_k}(t), \gamma_{x'_k}(t))=\delta>0$, then $\omega(x_k)= \mathcal{O}(w)$ where $w$ is tangent to a boundary of a flat strip by Lemma \ref{boundary}. Then $w$ is periodic by Theorem \ref{flat strip closed}. Hence by Lemma \ref{flat closed geodesic0}, $x_k$ is periodic. A contradiction.

         \end{itemize}
So in each case we arrive at a contradiction, we are done.
\end{proof}

Now we continue with our construction.
\begin{proposition}\label{prop2}
For arbitrary small $\epsilon_0 >0$, there exist $a, b\in \Lambda \cap (\text{Per\ }(\Phi))^c$ such that
\begin{equation}\label{1}
 d(a, b)=\epsilon_0,
\end{equation}
\begin{equation}\label{2}
   d(\Phi^t(a), \Phi^t(b))\leq \epsilon_0\ \ \ \ \forall t<0,
\end{equation}
\begin{equation}\label{3}
a \notin \mathcal{O}(b),
\end{equation}
\begin{equation}\label{4}
a \in W^u(b).
\end{equation}

\end{proposition}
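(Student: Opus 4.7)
The plan is to construct $a$ and $b$ as subsequential limits of the sequences $a_k := \Phi^{s_k}(x_k)$ and $b_k := \Phi^{s_k}(x'_k)$ produced by Proposition \ref{prop}. Pass to a subsequence so that $a_k \to a$ and $b_k \to b$ in the compact space $T^1 M$. Then property (1) is immediate from $d(a, b) = \lim_k d(a_k, b_k) = \epsilon_0$; property (2) follows because for any fixed $t \leq 0$ and any $k$ large enough that $s_k + t \geq 0$, Proposition \ref{prop} gives $d(\Phi^t(a_k), \Phi^t(b_k)) = d(\Phi^{s_k+t}(x_k), \Phi^{s_k+t}(x'_k)) < \epsilon_0$, which passes to the limit. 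The membership $a, b \in \Lambda \cap (\text{Per}(\Phi))^c$ follows since $\Lambda$ is closed in $T^1 M$ (by continuity of $K$), each $a_k, b_k$ lies in the aperiodic orbit $\mathcal{O}(x) \subset \Lambda \cap (\text{Per}(\Phi))^c$, and by Theorem \ref{C} this last set is closed in $\Lambda$.

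The crux of the argument is (4). Lift $\gamma_a, \gamma_b$ compatibly to $\tilde M$ so that $d(\tilde\gamma_a(t), \tilde\gamma_b(t)) \leq \epsilon_0$ on $(-\infty, 0]$. Convexity of this distance in nonpositive curvature, together with the one-sided bound, forces existence of $\delta := \lim_{t \to -\infty} d(\tilde\gamma_a(t), \tilde\gamma_b(t)) \in [0, \epsilon_0]$. Suppose $\delta > 0$. Following the template of the proof of Lemma \ref{boundary} but in backward time, extract $t_n \to -\infty$ with $\Phi^{t_n}(a) \to v$ and $\Phi^{t_n}(b) \to v'$ whose lifts satisfy $d(\tilde\gamma_v(t), \tilde\gamma_{v'}(t)) = \delta$ for every $t \in \mathbb{R}$. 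The Flat Strip Lemma (Lemma \ref{flat strip lemma}) then produces a flat strip bounded by $\tilde\gamma_v$ and $\tilde\gamma_{v'}$, so by Theorem \ref{flat strip closed}, $v$ is tangent to a closed geodesic. Since $v$ is a limit along $t_n \to -\infty$, $-v$ lies in $\omega(-a)$ and is periodic; because $-a \in \Lambda$, Lemma \ref{flat closed geodesic} forces $-a$, hence $a$, to be periodic, contradicting $a \in (\text{Per}(\Phi))^c$. Therefore $\delta = 0$ and $a \in W^u(b)$.

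For (3), since $d(a, b) = \epsilon_0 > 0$, $a \neq b$, and we need only rule out $a = \Phi^{t_0}(b)$ with $t_0 \neq 0$. In that case (4) gives $d(\Phi^{t+t_0}(b), \Phi^t(b)) = d(\Phi^t(a), \Phi^t(b)) \to 0$ as $t \to -\infty$. Pick $t_n \to -\infty$ with $\Phi^{t_n}(b) \to c$; continuity of $\Phi^{t_0}$ yields $\Phi^{t_n+t_0}(b) \to \Phi^{t_0}(c)$, while the preceding limit forces $\Phi^{t_n+t_0}(b) \to c$ as well. Hence $\Phi^{t_0}(c) = c$, so $c$ (and therefore $-c$) is periodic. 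Since $-c \in \omega(-b)$ and $-b \in \Lambda$, Lemma \ref{flat closed geodesic} makes $b$ periodic, contradicting aperiodicity. The hardest step throughout is eliminating the case $\delta > 0$ in (4): one must upgrade a one-sided boundedness of the lifted distance to genuine strong-unstable asymptoticity, and this upgrade depends essentially on the combined use of convexity, the Flat Strip Lemma, Theorem \ref{flat strip closed}, and Lemma \ref{flat closed geodesic}; once this is done, (3) follows by a short recurrence-plus-Lemma-\ref{flat closed geodesic} argument.
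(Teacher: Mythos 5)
Your proof is correct, and it departs from the paper's argument in an instructive way precisely in the handling of \eqref{3} and \eqref{4}. The paper proves \eqref{3} directly by a lifting argument: it first shows the lifted geodesics $\tilde\gamma_a$ and $\tilde\gamma_b$ are distinct (a triangle-inequality estimate showing $d(y_k,\tilde\gamma'_k)$ stays bounded away from $0$), and then rules out the existence of $\phi\in\pi_1(M)$ with $\phi(\tilde\gamma_a)=\tilde\gamma_b$ by locating the common negative endpoint at a fixed point of $\phi$ and invoking Lemma \ref{flat closed geodesic0}; for \eqref{4} the paper merely observes that one may time-shift $a$ and $b$ along their orbits to land on the same unstable horocycle. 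You instead prove \eqref{4} first and in a stronger form: convexity of $t\mapsto d(\tilde\gamma_a(t),\tilde\gamma_b(t))$ together with boundedness on $(-\infty,0]$ forces a limit $\delta$, the case $\delta>0$ is killed by the backward-time analogue of Lemma \ref{boundary} plus Theorem \ref{flat strip closed} and Lemma \ref{flat closed geodesic}, and $\delta=0$ gives $a\in W^u(b)$ outright (via the $1$-Lipschitz property of the Busemann function at the common negative endpoint, so no time-shift is needed). Property \eqref{3} then falls out of $\delta=0$ by a short recurrence argument: $a=\Phi^{t_0}(b)$ with $t_0\neq 0$ would produce a periodic backward limit point of $b$, forcing $b$ periodic via Lemma \ref{flat closed geodesic}. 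Your route also replaces the paper's direct aperiodicity check (via Lemma \ref{flat closed geodesic} applied to $a\in\omega(x)$) by an appeal to Theorem \ref{C}, which is equivalent since that theorem is already available at this point and is proved by the same lemma. What your approach buys: it avoids the fundamental-group/ideal-triangle machinery entirely for this proposition, establishes $a\in W^u(b)$ with the original $a,b$ rather than shifted substitutes, and makes \eqref{3} a formal consequence of \eqref{4} rather than an independent geometric step. The one point to be careful about, which you handle correctly, is that the $T^1M$ bound $d(\Phi^t(a),\Phi^t(b))\le\epsilon_0$ must be converted into the footpoint bound $d(\tilde\gamma_a(t),\tilde\gamma_b(t))\le\epsilon_0$ before invoking convexity; this is legitimate since $\epsilon_0$ is taken small relative to the injectivity radius and the Sasaki metric dominates the base metric.
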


\begin{proof}

We apply Proposition \ref{prop}. We can pick a subsequence $k_i\to +\infty$, such that
$$\lim_{k_i\to +\infty}\Phi^{s_{k_i}}(x_{k_i})=a,$$
and $$\lim_{k_i\to +\infty}\Phi^{s_{k_i}}(x'_{k_i})=b.$$
Then $d(a, b)=\lim_{k_i\to +\infty}d(\Phi^{s_{k_i}}(x_{k_i}),\Phi^{s_{k_i}}(x'_{k_i}))=\epsilon_0.$ We get \eqref{1}.

For any $t<0$, since $0<s_{k_i}+t<s_{k_i}$ for large $k_i$, we have:
$$d(\Phi^t(a), \Phi^t(b))=\lim_{k_i\to +\infty}(d(\Phi^{s_{k_i}+t}(x_{k_i}),\Phi^{s_{k_i}+t}(x'_{k_i}))) \leq \epsilon_0.$$
Hence we get \eqref{2}.

Next suppose $a$ is periodic. Since
$$\lim_{k_i\to +\infty} \Phi^{t_{k_i}+s_{k_i}}(x)=\lim_{k_i\to +\infty}\Phi^{s_{k_i}}(x_{k_i})=a,$$ then $x$ is periodic by Lemma \ref{flat closed geodesic}. Contradiction. So $a\in (\text{Per\ }(\Phi))^c$. Similarly $b\in (\text{Per\ }(\Phi))^c$.  Hence $a, b\in \Lambda \cap (\text{Per\ }(\Phi))^c$.

Now we prove \eqref{3}, $a \notin \mathcal{O}(b)$. For a simpler notation, we assume
$$\lim_{k\to +\infty}\Phi^{s_{k}}(x_{k})=a,$$
and
$$\lim_{k\to +\infty}\Phi^{s_{k}}(x'_{k})=b.$$
We can lift $\gamma_{x_k}(t), \gamma_{x'_k}(t)$ on $M$ to geodesics $\tilde{\gamma}_k, \tilde{\gamma}'_k$ respectively on $\tilde{M}$ in the way such that $d(x_k, x'_k)<\frac{1}{k}$,
$d(y_k,y'_k)=\epsilon_0$, where $y_k=\Phi^{s_k}(x_k)$, $y'_k=\Phi^{s_k}(x'_k)$, and moreover $y_k \to a$, $y'_k \to b$. Then $\tilde{\gamma}_k$ converges to $\tilde{\gamma}=\tilde{\gamma}_a$, $\tilde{\gamma}'_k$ converges to $\tilde{\gamma}'=\tilde{\gamma}_b$ and $d(a, b)= \epsilon_0$. See Figure 2 (we use same notation for vector and its footpoint).

\begin{figure}[h]
\centering
\setlength{\unitlength}{\textwidth}

\begin{tikzpicture}
\draw (0,0) circle (2);

\draw (0,-2) to [out=92,in=275] (-0.5,1.92)node[above] {$\tilde{\gamma}_k$};
\draw (0.2,-1.98)to [out=91,in=268] (0.5, 1.92)node[above] {$\tilde{\gamma}'_k$};

\coordinate [label=left:$x_k$] (x_k) at (-0.04,-1.5);
\coordinate [label=right:$x'_k$] (x'_k) at (0.2,-1.5);
\coordinate [label=left:$y_k$] (y_k) at (-0.39,1);
\coordinate [label=right:$y'_k$] (y'_k) at (0.45,1.2);
\coordinate [label=right:$z_k$] (z_k) at (0.45,.9);

\draw (y_k) to [out=25,in=175] node[above ] {$\epsilon_0$}(y'_k);
\draw (x'_k) to [out=100,in=290] (y_k);
\draw (y_k) to [out=-10,in=175] (z_k);
\draw [right angle symbol={y_k}{z_k}{x'_k} size=.01];

\fill (x_k) circle (1.5pt) (x'_k) circle (1.5pt) (y_k)  circle (1.5pt) (y'_k)  circle (1.5pt)
(z_k)  circle (1.5pt);

\end{tikzpicture}
\caption[]{Proof of $\tilde{\gamma} \neq \tilde{\gamma}'$}

\end{figure}

First we show $d(y_k, \tilde{\gamma}_k')$ is bounded away from $0$. Denote $d_k:= d(y_k, \tilde{\gamma}'_k)=d(y_k, z_k)$, $l_k:=d(y_k, x'_k)$, $b_k:= d(x'_k, z_k)$, and $b'_k:=d(z_k, y'_k)$, and we already know that $d(x'_k, y'_k)=s_k$. Suppose $d_k \to 0$ as $k\to +\infty$. By triangle inequality, $\lim_{k\to +\infty}(l_k-b_k)=0$. But since $\lim_{k\to +\infty}(l_k-s_k) \leq \lim_{k \to+\infty}d(x_k, x'_k)=0$ and $s_k=b_k+b'_k$, we have $\lim_{k\to+\infty}b'_k=0$. But by triangle inequality, $\epsilon_0 < d_k+b_k' \to 0 $, a contradiction. Now$\tilde{\gamma} \neq \tilde{\gamma}'$ follows from $d(a, \tilde{\gamma}')=\lim_{k\to +\infty}d(y_k, \tilde{\gamma}'_k) \geq d_0$ for some $d_0 >0$.

Next we suppose there exists a $\phi \in \pi_1(M)$ such that $\phi(\tilde{\gamma})=\tilde{\gamma}'$. See Figure 3. Observe that $\tilde{\gamma}(-\infty)=\tilde{\gamma}'(-\infty)$ since $d(\Phi^t(a), \Phi^t(b))\leq \epsilon_0$, for $\forall t<0$. Let $\tilde{\gamma}_0$ be the closed geodesic such that $\phi(\tilde{\gamma}_0)=\tilde{\gamma}_0$. Then $\tilde{\gamma}(-\infty)=\tilde{\gamma}_0(-\infty)$. By Lemma \ref{flat closed geodesic0}, $\tilde{\gamma}$ is a closed geodesic, i.e. $a$ is a periodic point. We arrive at a contradiction. Hence for any $\phi \in \pi_1(M)$, $\phi(\tilde{\gamma})\neq\tilde{\gamma}'$. So $a \notin \mathcal{O}(b)$, and we get \eqref{3}.

At last, if $a \notin W^u(b)$, we can replace $a$ by some $a' \in \mathcal{O}(a)$, $b$ by some $b' \in \mathcal{O}(b)$ such that  $a'\in W^u(b')$ and the above three properties still hold for a different $\epsilon_0$. We get \eqref{4}.

\begin{figure}[h] \label{figure3}
\centering
\setlength{\unitlength}{\textwidth}

\begin{tikzpicture}
\draw (0,0) circle (2);

\draw (0,-2) to [out=102,in=275] (-0.5,1.92)node[above] {$\tilde{\gamma}$};
\draw (0,-2)to [out=91,in=250] (0.5, 1.92)node[above right] {$\tilde{\gamma}'=\phi(\tilde{\gamma})$};
\draw (0,-2) to [out=70,in=210] (2,0)node[right] {$\tilde{\gamma}_0$};

\coordinate [label=left:$a$] (a) at (-0.42,1);
\coordinate [label=right:$b$] (b) at (0.22,1);
\fill (a) circle (1.5pt) (b) circle (1.5pt);
\end{tikzpicture}
\caption[]{Proof of $\phi(\tilde{\gamma}) \neq \tilde{\gamma}'$}

\end{figure}

\end{proof}

\begin{proof}[Proof of Theorem \ref{B}]

We apply Proposition \ref{prop2}. Let $y=-a$, $z=-b$, then $y, z \in \Lambda \cap (\text{Per\ }(\Phi))^c$, $d(\Phi^t(y), \Phi^t(z))\leq \epsilon_0,\ \forall t>0$, $z \notin \mathcal{O}(y)$ and $y \in W^s(z)$.

If $\epsilon_0$ is small enough, we can lift geodesics $\gamma_y(t)$ and $\gamma_z(t)$ to $\tilde{\gamma}_y(t)$ and $\tilde{\gamma}_z(t)$ respectively on $\tilde{M}$ such that $d(\tilde{\gamma}_y(t),\tilde{\gamma}_z(t)) \leq \epsilon_0$ for any $t >0$ and $y\in \tilde{W}^s(z)$. Suppose $\lim_{t\to +\infty} d(\tilde{\gamma}_y(t), \tilde{\gamma}_z(t))=\delta >0$. Then by Lemma \ref{boundary}, $\tilde{\gamma}_y(t)$ and $\tilde{\gamma}_z(t)$ converge to the boundary of a flat strip, and hence $y$ and $z$ are periodic by Lemma \ref{flat closed geodesic0}, contradiction. So $\lim_{t\to +\infty} d(\tilde{\gamma}_y(t), \tilde{\gamma}_z(t))=0$. Hence $d(\Phi^t(y), \Phi^t(z))\to 0,  \ \ \text{as\ } t\to +\infty$.

\end{proof}

\subsection{Proof of Theorem \ref{A}}

Part of the proof of Theorem \ref{A} is a verbatim repetition of the one of Proposition \ref{prop2}, so we omit it.

\begin{proof}[Proof of Theorem \ref{A}]

Suppose $\Lambda \subset \text{Per\ }(\Phi)$. If $x\in \Lambda$, then $x$ is tangent to an isolated closed flat geodesic or a flat strip.

Assume the contrary. Then there exists a sequence of different vectors $x'_k \in \Lambda$ such that $\lim_{k\to +\infty}x'_k=x$ for some $x\in \Lambda$. Here different $x'_k$ are tangent to different isolated closed geodesics or to different flat strips, and $x$ is tangent to an isolated closed geodesic or to a flat strip. For large enough $k$, we suppose $d(x'_k, x) <\frac{1}{k}$. Fix any small $\epsilon_0 >0$. It is impossible that $d(\Phi^t(x'_k), \Phi^t(x)) \leq \epsilon_0$ for $\forall t>0$. Otherwise, $\tilde{\gamma}_{x'_k}(t), \tilde{\gamma}_x(t)$ are positively asymptotic closed geodesics so they must coincide by Lemma \ref{flat closed geodesic0}. Hence there exists a $s_k\to +\infty$, such that
$$d(\Phi^{s_k}(x'_k), \Phi^{s_k}(x))=\epsilon_0,$$
and
$$d(\Phi^{s}(x'_k), \Phi^{s}(x))\leq \epsilon_0\ \ \forall 0\leq s <s_k.$$

Denote $y_k:=\Phi^{s_k}(x)$ and $y'_k:=\Phi^{s_k}(x'_k)$. Without loss of generality, suppose $y_k\to a$ and $y'_k\to b$. Similar proof as in Proposition \ref{prop2} gives $d(a, b)=\epsilon$ and $d(\Phi^t(a), \Phi^t(b)) \leq \epsilon_0$ for $\forall t\leq 0$. If we lift the geodesics to $\tilde{M}$ (using the same notation as in the proof of Proposition \ref{prop2}), we can prove $\tilde{\gamma} \neq \tilde{\gamma}'$ similarly. But then we have two flat closed geodesics $\tilde{\gamma}$ and $\tilde{\gamma}'$ that are negatively asymptotic, so they must coincide by Lemma \ref{flat closed geodesic0}. A contradiction.

\end{proof}

\section{Proof of Theorem \ref{Main}}

 We shall prove Theorem \ref{Main} by arguing that the second of the dichotomy cannot happen if $\{p\in M: K(p)=0\}^c$ has only finitely many components.

\begin{proof}[Proof of Theorem \ref{Main}]

Suppose $\Lambda \cap (\text{Per\ }(\Phi))^c \neq \emptyset$. Consider the two points $y$ and $z$ given by Theorem \ref{B}. We lift the geodesics $\gamma_y(t)$ and $\gamma_z(t)$ to the universal covering $\tilde{M}$, denoted as $\tilde{\gamma}_1$ and $\tilde{\gamma}_2$ respectively.

Consider the connected components of $\{p\in M: K(p)<0\}$ lifted to $\tilde{M}$ and we want to see how they distribute inside the ideal triangle bounded by $\tilde{\gamma}_1$ and $\tilde{\gamma}_2$. Since $\tilde{\gamma}_1$ and $\tilde{\gamma}_2$ are flat geodesics, any connected component doesn't intersect $\tilde{\gamma}_1$ or $\tilde{\gamma}_2$. Since the number of the connected components on $M$ is finite, the radii of their inscribed circles are bounded away from $0$. When lifted to the universal covering, the sizes of the connected components do not change. But $d(\tilde{\gamma}_1(t), \tilde{\gamma}_2(t))\to 0$ as $t \to +\infty$, we can claim that the connected components on $\tilde{M}$ cannot approach $w$ inside of the ideal triangle. See Figure \ref{1.3}.

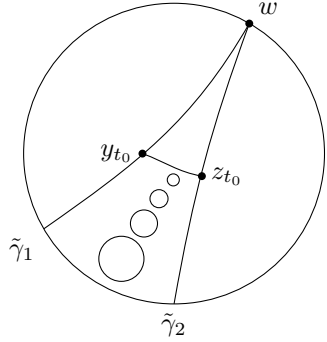
\begin{figure}[h] \label{1.3}

\centering
\setlength{\unitlength}{\textwidth}

\begin{tikzpicture}
\draw (0,0) circle (2);

\coordinate [label=left:$y_{t_0}$] (y_{t_0}) at (-.42,0);
\coordinate [label=right:$z_{t_0}$] (z_{t_0}) at (.37,-.3);
\coordinate [label=above right:$w$] (w) at (1,1.73);
\fill (z_{t_0}) circle (1.5pt) (y_{t_0}) circle (1.5pt) (w) circle (1.5pt);

\draw (-1.73,-1) node[below left] {$\tilde{\gamma}_1$} to [out=35,in=240] (1,1.73);
\draw (0,-2) node[below] {$\tilde{\gamma}_2$} to [out=80,in=250] (1,1.73);
\draw (y_{t_0}) to [out=340,in=170] (z_{t_0});

\draw (-.7,-1.4) circle (0.3);
\draw (-.4,-.93) circle (.18);
\draw (-.2,-.6) circle (.12);
\draw (-.01,-.35) circle (.08);

\end{tikzpicture}
\caption[]{Proof of Theorem \ref{Main}}

\end{figure}

So there exist a $t_0 >0$, $y_{t_0}=\Phi^{t_0}(y)$, $z_{t_0}=\Phi^{t_0}(z)$,  such that the infinite triangle $z_{t_0}y_{t_0}w$ is a flat region. Then $d(\Phi^t(y), \Phi^t(z))\equiv d((y_{t_0}, z_{t_0})$ for all $t \geq t_0$ . Indeed, if we construct a geodesic variation between $\tilde{\gamma}_1$ and $\tilde{\gamma}_2$, then Jacobi fields are constant for $t\geq t_0$ since $K\equiv 0$, thus $d(\tilde{\gamma}_1(t), \tilde{\gamma}_2(t))$ is constant when $t \geq t_0$. We get a contradiction since $d(\Phi^t(y), \Phi^t(z))\to 0$ as $t \to +\infty$ by Theorem \ref{B}.

Finally we conclude that $\Lambda \subset \text{Per\ }(\Phi)$. In particular the geodesic flow is ergodic by Theorem \ref{A}.
\end{proof}

At last, let us suppose that $\{p\in M: K(p) <0\}$ has infinitely many connected components. By the argument in the proof of Theorem \ref{Main}, we know that the infinite triangle in Figure \ref{1.3} contains at most finitely many liftings of a same single connected component. But we don't know if there are still only finitely many liftings of all different connected components since the size of connected components could be arbitrarily small.

\begin{question} \label{inf}
If $\{p\in M: K(p) <0\}$ has infinitely many connected components, is it possible that $\lim_{t\to +\infty}d(\Phi^t(y), \Phi^t(z))=0$ for some $y, z \in \Lambda$, $y\notin \mathcal{O} (z)$?
\end{question}

A negative answer to Question \ref{inf} together with Theorem \ref{A} will imply Conjecture \ref{conjecture}, and in particular the ergodicity of the geodesic flow.
\\[5mm]
\textbf{Acknowledgement.} The author would like to thank Federico Rodriguez Hertz for posing the problem and numerous discussions with him. The author would also like to express gratitude to Anatole Katok for his comments and constant encouragement. He also thanks Barbara Schapira, Keith Burns and Gerhard Knieper for valuable comments.

\end{document}